\numberwithin{equation}{section}
\newtheorem{stat}{Statement}[section]
\newtheorem{theorem}[stat]{Theorem}
\newtheorem{proposition}[stat]{Proposition}
\newtheorem{lemma}[stat]{Lemma}
\newtheorem{definition}[stat]{Definition}
\newtheorem{remark}[stat]{Remark}
\newcommand{\IR}{{\mathbb R}}
\newcommand{\R}{{\mathbb R}}
\newcommand{\IN}{{\mathbb N}}
\newcommand{\beq}{\begin{equation}}
\newcommand{\eeq}{\end{equation}}
\newcommand{\bal}{\begin{align}}
\newcommand{\eal}{\end{align}}
\newcommand{\beqn}{\begin{equation*}}
\newcommand{\eeqn}{\end{equation*}}
\newcommand{\baln}{\begin{align*}}
\newcommand{\ealn}{\end{align*}}
\newcommand{\Pb}{\mathbb P}
\newcommand{\E}{\mathbb{E}}
\newcommand{\helv}{
\fontfamily{phv}\fontseries{b}\fontsize{9}{11}\selectfont}
\title{A spectral-based numerical method for Kolmogorov equations in Hilbert spaces.}
\author{Francisco Delgado-Vences\footnote{Istituto Nazionale di Geof\'isica e Vulcanolog\'ia,
Pisa, Italy. Email addresses: francisco.delgado@ingv.it,\newline delgadovences@gmail.com}\hspace*{0.1cm} and Franco Flandoli\footnote{Dipartimento
di Matematica, Universit\'a di Pisa, Largo Bruno Pontecorvo 5,  56127 Pisa, Italy. Email address:
flandoli@dma.unipi.it.}.}
\begin{document}

\maketitle

\begin{abstract}

We propose a numerical solution for the solution of the Fokker-Planck-Kolmogorov (FPK) equations associated with stochastic
partial differential equations in Hilbert spaces. 
 The method is based on the spectral decomposition of the Ornstein-Uhlenbeck semigroup associated to the
Kolmogorov equation. This allows us to write the solution of the Kolmogorov equation as a deterministic version of the Wiener-Chaos
Expansion. By using this expansion we reformulate the Kolmogorov equation as a infinite system of ordinary differential equations, 
and by truncation it we set a linear finite system of differential equations. The solution of such system allow us to build an 
approximation to the solution of the Kolmogorov equations.  We test the numerical method with the Kolmogorov equations associated 
with a stochastic diffusion equation, a Fisher-KPP stochastic equation and a stochastic Burgers Eq. in dimension 1. 
\end{abstract}

\section{Introduction}

Stochastic Partial Differential Equations (SPDE’s) are important tools in modeling complex phenomena, they arise in many fields of
knowledge like Physics, Biology, Economy, Finance, etc.. Develop efficient numerical methods for simulating SPDE’s is very importan 
but also very difficult and challenging.\medskip

There exists in literature several approachs in order to solve numerically a SPDE. Among them there exists Monte Carlo simulations, 
Karhunen-Loeve Expansion, Wiener Chaos expansion, stochastic Taylor approximations for SPDE’s, etc. In order to solve 
numerically an SPDE one can apply one of this methods. Here we will mention some of them but our list reference is 
far away to be exhaustive.\medskip

The Monte Carlo (MC) simulations for SPDE's have been explored intensively in the last 20 years (\cite{pl}, \cite{kl-pl}  ). The basis
idea of MC is to sample the randomness in the SPDE's and solve the stochastic equations realization by realization, this is because 
for each given realization of the randomness, the SPDE's becomes deterministic and can be solved by usual deterministic numerical 
methods. the disadvatage is that many of that ``samples'' are required for suffcient accuracy, causing suboptimal efficiency even if 
optimal algebraic solvers are used; to overcome this issue Giles (\cite{gi}, \cite{gi1}) has introduce a modificacion of MC for the 
numerical solution of It\^o stochastic ordinary differential equations, following basic ideas in earlier work by S. Heinrich 
\cite{he} on numerical quadrature. This method is the so-called Multilevel Monte Carlo (MLMC).\medskip 

The main idea of MLMC methods is to apply the MC method for a nested sequence of stepsizes while balancing the number of samples 
according to the stepsize. MLMC allows to significantly speed up to classical MC methods thanks to this hierarchical sampling; 
however this method still can have limitations for SPDE's. \medskip

Other approach is use spectral methods, in particular use the Karhunen-Loeve expansion (KLE) and the Wiener Chaos expansions for
solving SPDE’s. For the former one can study the theory developed in \cite{le-kn}; in this work they proposed several methods to
solve SPDE’s and this methods are latter applied to solve the stochastic Navier-Stokes equations. Nevertheless, this method can
have limitations since in this approach the source of randomness is usually represented by a fixed number of random variables and
if we consider, for instance, stochastic equations arising in fluid dynamics with a random forcing white in time which has a 
divergent Karhunen-Loeve expansion, then it is not possible to apply the KLE to this kind of equations.\medskip

Hou et al \cite{ho-lu-ro-zh} propose a numerical method based on Wiener Chaos expansion and apply it to solve the stochastic
Burgers and Navier-Stokes equations driven by Brownian motion. They consider an SPDE with Brownian motion forcing and
since a Brownian motion can be expand as a linear combination of independent Gaussian random variables, then they expand the solution
of theSPDE's as a Fourier-Hermite series of those Gaussian random variables, this is a version of the Cameron-Martin decomposition.\medskip

There is another approach that involves stochastic Taylor approximations for stochastic partial differential equations (see
\cite{je-kl}). This Taylor expansions are based on an iterated application of the It\^o formula. However, For the solutions of 
stochastic partial differential equations in Hilbert (or Banach spaces) there is no way to define directly the It\^o formula.
Nevertheless, it can be constructed by taking advantage of the mild form representation of the solutions.\medskip

The  Fokker-Planck-Kolmogorov (FPK) equation is a partial differential 
equation that describes the time evolution of the probability density function of the velocity of a particle under the influence of 
drag forces and random forces, it is a kind of continuity equation for densities. Citing \cite{da-za} ``parabolic equations on Hilbert 
spaces appear in mathematical physics to model systems with infinitely many degrees of freedom. Typical examples are provided by spin
configurations in statistical mechanics and by crystals in solid state theory. Infinite-dimensional parabolic equations provide an 
analytic description of infinite dimensional diffusion processes in such branches of applied mathematics as population biology, fluid
dynamics, and mathematical finance.''. This kind of equations have been deeply studied in the last years, see for instance 
\cite{bo-da-ro}, \cite{da-fl-ro}, \cite{da} and the references therein.\\

Numerical methods for FPK equations associated with SPDEs have been studied, up to our knowledge, just in a few articles, here we will 
mention just one. Schwab and Suli \cite{sc-su} have formulated a space-time variational method to approximate solution of 
Kolmogorov-type equations in infinite dimensions. They consider an infinite-dimensional Hilbert space $\mathcal{H}$, a Gaussian 
measure $\mu$ with trace class covariance operator $Q$ on $\mathcal{H}$ and  the space $L^2(H,\mu)$ of functions on $\mathcal{H}$ 
which are square-integrable with respect to the measure $\mu$. They showed the well-posedness of Fokker-Plank equations and 
Ornstein-Uhlenbeck equations on $L^2(H,\mu)$. Moreover, they constructed sequences of finite-dimensional approximations that attain
the best possible convergence rates afforded by best $N$-terms approximations of the solution. They used an spectral method based on 
Wiener-Hermite polynomial chaos expansions in terms of a sequence of independent Gaussian random variables on $\mathcal{H}$ and a 
Wavelet type Riesz basis with respect to the time variable. The use of the spectral basis of Wiener-Hermite polynomial chaos allow 
them to avoid meshing the infinite-dimensional “domain” $\mathcal{H}$ of solutions of the Kolmogorov-type equations. However they do
not present numerical examples and the questions about the feasibility of their method are open.\medskip

In this paper, we introduce a novel numerical method that can have some similitude with the  one proposed by Schwab and Suli but also 
have substantial differences. Indeed, our method is also based on spectral methods for the variable on $\mathcal{H}$, but we use 
a deterministic version of the Wiener-Chaos Expansion on the infinite-dimensional ``domain'' $\mathcal{H}$ instead the classical 
Wiener-Chaos Expansion with the use of a sequence of Gaussian random variables, this allow allow us to avoid 
meshing the space  $\mathcal{H}$ but we also avoid the so-called curse of dimensionality: the associated computational cost grows 
exponentially as a function of the number of random variables defining the underlying probability space of the problem 
(see \cite{do-ia} for instance). \medskip

The second difference is with respect to the time variable, where, instead using  Wavelet type Riesz basis we set up a finite system
of coupled ordinary differential equations and by solving it we fix the coefficients as a time functions. we have applied the method 
to three SPDE's: a stochastic diffusion, a stochastic FisherKPP equation and a stochastic burgers equation and the results show that
the behaviour of the method is good. However, since the method is analogue to the classical deterministic spectral method thus it can 
be extended to improve its performance. This is the subject of a future research.\medskip

This paper is organized as follows. In section \ref{fpk-sect} we review the Fokker-Plank-Kolmogorov equation associated with SPDE's in
a separable Hilbert space. In section \ref{OUS-sect} we study the spectral decomposition of the Ornstein-Uhlenbeck semigroup
associated to the Kolmogorov equation which will be used to do the numerical approximation to the solution of 
the FPK equation, this is done in section \ref{NA-sect}. In section \ref{WPC-sect} we prove a theorem on the well posedness 
and  convergence of the numerical aproximation. Results on the application of the proposed method are presented in section 
\ref{AM-sect}, where we have applied the method to a linear stochastic diffusion equation, a Fisher-KPP stochastic equation and 
a stochastic Burgers Eq. in dimension 1.

\section{Fokker-Plank-Kolmogorov equation}\label{fpk-sect}
%{\color{red} A better introduction to FPK equations is missed}\\

In a separable infinite-dimensional Hilbert space $\mathcal{H}$ with inner product $\langle ,  \rangle_\mathcal{H}  $ we define a
Gaussian measure $\mu$ with mean zero and nuclear covariance operator $\Lambda$ with  $Tr(\Lambda)<+\infty$.

We focus on the stochastic differential equation in $\mathcal{H}$ 
\begin{equation}
\label{P1s2.1}
 dX_t=AX_tdt+B(X_t)dt+\sqrt{Q}dW_t,
\end{equation}
where the operator $A:\mathcal{D}(A)\subset \mathcal{H}\rightarrow \mathcal{H}$ is the infinitesimal generator of a strongly 
continuous semigroup $e^{tA}$ in $\mathcal{H}$, $Q$ is a bounded operator from another Hilbert space $\mathcal{U}$ to $\mathcal{H}$ 
and $B:\mathcal{D}(B)\subset \mathcal{H}\rightarrow \mathcal{H}$ is a nonlinear mapping. 

The equation \eqref{P1s2.1} can be associated to a Kolmogorov equation in the next way, we define 
\begin{equation}
\label{P1s2.2}
u(t,x)=\E\big[u_0(X_t^x)\big], 
\end{equation}
where $u_0:\mathcal{H}\rightarrow \IR$ and $X_t^x$ is the solution to \eqref{P1s2.1} with initial conditions $X_0=x$ where 
$x\in\mathcal{H}$. Then $u$ satisfies the Kolmogorov equation
\begin{equation}
\label{P1s2.3}
\frac{\partial u}{\partial t}= \frac{1}{2}Tr(QD^2u)+ \langle Ax, Du \rangle_\mathcal{H} + \langle B(x),Du \rangle_\mathcal{H},\qquad x\in D(A).
\end{equation}

Several authors have proved results on existence and uniqueness of the solution of the Kolmogorov equations, see for instance 
Da Prato \cite{da} for a survey,  Da Prato-Debussche \cite{da-de} for the Burgers equation,  Barbu-Da Prato \cite{ba-da} for the 
 2D Navier-Stokes stochastic flow in a channel.

\section{On the Ornstein-Uhlenbeck semigroup}\label{OUS-sect}

Following \cite{liu},  in $\mathcal{H}$ we define a Gaussian measure $\mu$ with mean zero and nuclear covariance operator $\Lambda$ with  
$Tr(\Lambda)<+\infty$ and since $\Lambda:\mathcal{H}\mapsto \mathcal{H}$ is a positive definite, self-adjoint operator then its 
square-root operator $\Lambda^{1/2}$ is a positive definite, self-adjoint Hilbert-Schmidt operator on $\mathcal{H}$.

Define the inner product 
\[
\langle g, h \rangle_0 := \langle \Lambda^{-1/2}g ,  \Lambda^{-1/2} h\rangle_\mathcal{H}, \quad \hbox{\rm for}\quad g,h\in \Lambda^{1/2} \mathcal{H}.
\]
Let $\mathcal{H}_0$ denote the Hilbert subspace of $\mathcal{H}$, which is the completion of $\Lambda^{1/2} \mathcal{H}$ with 
respect to the norm $\|g\|_0:= \langle g, g \rangle_0^{1/2} $. Then ${\mathcal{H}_0}$ is dense in $\mathcal{H}$ and the inclusion
map $i:\mathcal{H}_0\hookrightarrow\mathcal{H}$ is compact. The triple $(i,\mathcal{H}_0,\mathcal{H})$ forms an abstract Wiener space. 

Let $\mathbb{H}=L^2( \mathcal{H},\mu)$ denote the Hilbert space of Borel measurable functionals on the probability space with inner 
product 
\[
 \langle\Phi,\Psi\rangle_\mathbb{H}:=\int_{\mathcal{H}} \Phi(v) \Psi(v)\mu(dv),\quad\hbox{\rm for}\quad \Phi,\Psi\in\mathbb{H}, 
\]
and norm $\|\Phi\|_{\mathbb{H}}:=\langle\Phi,\Phi\rangle_\mathbb{H}^{1/2}$. In $\mathbb{H}$ we choose a basis system $\{\varphi_k\}$ 
such that $\varphi_k\in \mathcal{H}$.

A functional $\Phi:\mathcal{H}\mapsto \IR$, is said to be a smooth simple functional (or a cylinder functional) if there exists a 
$C^\infty$-function $\phi$ on $\IR^n$ and $n$-continuous linear functional $l_1,\ldots,l_n$ on $\mathcal{H}$ such that for
$h\in\mathcal{H}$
\[
 \Phi(h)=\phi(h_1,\ldots,h_n)\quad
\mbox{\rm where}\qquad h_i=l_i(h),\quad i=1,\ldots,n.
\]
The set of all such functionals will be denoted by $\mathcal{S}(\mathbb{H})$. \\

Denote by $P_k(x)$ the Hermite polynomial of degree $k$ taking values in $\IR$. Then, $P_k(x)$ is given by the following formula
\[
 P_k(x)=\frac{(-1)^k}{(k!)^{1/2}} e^{\tfrac{x^2}{2}} \frac{d^k}{dx^k}e^{-\tfrac{x^2}{2}}
\]
with $P_0=1$. It is well-known that $\{P_k(\cdot)\}_{k\in\IN}$ is a complete orthonormal system for $L^2(\IR,\mu_1(dx))$ with
$\mu_1(dx)=\tfrac{1}{\sqrt{2\pi}}e^{-\tfrac{x^2}{2}}dx$.

Define the set of infinite multi-index as
\[
 \mathcal{J}=\Big\{\bm{\alpha}=(\alpha_i,i\ge 1)\quad \big|\quad\alpha_i\in \IN\cup\{0\},\quad |\bm{\alpha}|:=\sum_{i=1}^\infty
 \alpha_i<+\infty  \Big\}
\]

For $\bm{n}\in\mathcal{J} $ define the {\it Hermite polynomial functionals} on $\mathcal{H}$ by 
\begin{align}
\label{s1.2}
H_{\bm{n}}(h) = \prod_{i=1}^\infty P_{n_i}(l_i(h)),\quad h\in\mathcal{H}_0,\quad \bm{n}\in \mathcal{J},
\end{align}
and where 
\[
l_i(h)=\langle h,  \Lambda^{-1/2}\varphi_i \rangle_\mathcal{H}, \quad i=1,2,\ldots
\]

where $P_n(\xi)$ is the usual Hermite polynomial for  $\xi\in\R$ and $n\in\IN$.

\begin{remark}
%By using the isomorphisms between infinite dimensional Hilbert spaces 
Notice that $l_i(h)$ is defined only for $h \in\mathcal{H}_0$. However, regarding $h$ as a $\mu$-random variable in
$\mathcal{H}$, we have $\E\big(l_i (h)\big) = \|\varphi_i \|^2  = 1$ and then $l_k (h)$ can be defined $\mu$-a.e. $h \in\mathcal{H}$,
similar to defining a stochastic integral.

It is possible to identify the Hermite polynomial functionals
defined in \eqref{s1.2}, for $h \in\mathcal{H}_0$, as a deterministic version of the Wick polynomials defined on the canonical Wiener space.(for further 
details see \cite{im} for instance).
\end{remark}

We have the following result (See Theorems 9.1.5 and 9.1.7 in Da Prato-Zabczyk \cite{da-za} or Lemma 3.1 in chapter 9 from 
Chow \cite{liu}).
\begin{lemma}\label{s1.le1}
 For $h\in\mathcal{H}  $ let $l_i(h)=\langle h,  \Lambda^{-1/2}\varphi_i \rangle_\mathcal{H}$, $ i=1,2,\ldots$. Then the set $\{H_{\bm{n}}\}$ of all Hermite polynomials on $\mathcal{H} $ forms a complete orthonormal system for $\mathbb{H} $. Hence the set of all functionals are dense in $\mathbb{H} $. Moreover, we have the direct sum decomposition:
 \[
\mathbb{H} = \bigoplus_{j=0}^\infty K_j,
\]
where $K_j$ is the subspace of $\mathbb{H} $ spanned by $\{H_{\bm{n}}: |\bm{n}|=j\}$.\hfill$\Box$ 
\end{lemma}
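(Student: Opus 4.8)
The plan is to reduce the statement to the one–dimensional Wiener chaos theorem by exploiting that, under $\mu$, the coordinate functionals $\{l_i\}$ behave exactly like a sequence of independent standard Gaussians. First I would record the probabilistic meaning of the $l_i$: since $\mu$ is centered Gaussian with covariance $\Lambda$, for $h$ distributed according to $\mu$ each $l_i(h)=\langle h,\Lambda^{-1/2}\varphi_i\rangle_{\mathcal H}$ (defined $\mu$-a.e.\ as in the Remark, in the spirit of a stochastic integral) is centered Gaussian, and by polarization of the identity $\E_\mu[l_i^2]=\|\varphi_i\|^2=1$ noted there,
\[
\E_\mu\big[l_i(h)\,l_j(h)\big]=\langle \Lambda\Lambda^{-1/2}\varphi_i,\ \Lambda^{-1/2}\varphi_j\rangle_{\mathcal H}=\langle\varphi_i,\varphi_j\rangle_{\mathcal H}=\delta_{ij},
\]
the last step using that $\{\varphi_k\}$ is orthonormal in $\mathcal H$. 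Being jointly Gaussian and pairwise uncorrelated, $(l_i)_{i\ge1}$ is i.i.d.\ $N(0,1)$ under $\mu$; equivalently the push-forward of $\mu$ under $h\mapsto(l_i(h))_{i\ge1}$ is the product measure $\bigotimes_{i\ge1}\mu_1$.

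Next I would check orthonormality of $\{H_{\bm n}\}$. For $\bm n,\bm m\in\mathcal J$ only finitely many indices are nonzero and $P_0\equiv1$, so by independence of the $l_i$,
\[
\langle H_{\bm n},H_{\bm m}\rangle_{\mathbb H}=\prod_{i\ge1}\int_{\R}P_{n_i}(\xi)\,P_{m_i}(\xi)\,\mu_1(d\xi)=\prod_{i\ge1}\delta_{n_i m_i}=\delta_{\bm n\bm m},
\]
since $\{P_k\}_{k\in\IN}$ is orthonormal in $L^2(\R,\mu_1)$. Thus $\{H_{\bm n}\}_{\bm n\in\mathcal J}$ is an orthonormal system in $\mathbb H$.

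The real work is completeness. Set $\mathcal F_n=\sigma(l_1,\dots,l_n)$ and $\mathcal F_\infty=\sigma(l_i:i\ge1)$. Because $\{\varphi_k\}$ is a complete orthonormal basis of $\mathcal H$, the functionals $(l_i)$ separate points of $\mathcal H$ modulo a $\mu$-null set, so $\mathcal F_\infty$ coincides with the Borel $\sigma$-algebra of $\mathcal H$ up to $\mu$-null sets; hence it suffices to approximate $\mathcal F_\infty$-measurable square-integrable functionals. Given $\Phi\in\mathbb H$, the martingale $\Phi_n:=\E_\mu[\Phi\mid\mathcal F_n]$ converges to $\Phi$ in $\mathbb H$ by the $L^2$ martingale convergence theorem. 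Each $\Phi_n$ has the form $g_n(l_1,\dots,l_n)$ with $g_n\in L^2(\R^n,\mu_1^{\otimes n})$, and since $\{P_k\}_{k\in\IN}$ is an orthonormal basis of $L^2(\R,\mu_1)$, the tensor products $P_{n_1}\otimes\cdots\otimes P_{n_n}$ form an orthonormal basis of $L^2(\R^n,\mu_1^{\otimes n})$. Therefore $g_n(l_1,\dots,l_n)$ — and with it $\Phi$ — lies in the closed linear span of $\{H_{\bm n}:\bm n\in\mathcal J,\ n_i=0\text{ for }i>n\}\subset\{H_{\bm n}\}$. This proves that $\{H_{\bm n}\}$ is complete in $\mathbb H$, hence that the simple (polynomial) functionals are dense in $\mathbb H$.

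Finally, the chaos decomposition follows formally: for $\bm n\ne\bm m$, in particular whenever $|\bm n|\ne|\bm m|$, we have $\langle H_{\bm n},H_{\bm m}\rangle_{\mathbb H}=0$, so the subspaces $K_j=\overline{\mathrm{span}}\{H_{\bm n}:|\bm n|=j\}$ are mutually orthogonal, and the completeness just proved says their closed linear span is all of $\mathbb H$, i.e.\ $\mathbb H=\bigoplus_{j\ge0}K_j$. I expect the \emph{main obstacle} to be the measurability step $\mathcal F_\infty=\mathcal B(\mathcal H)\bmod\mu$, which genuinely uses the basis property of $\{\varphi_k\}$ and the abstract Wiener space structure of the triple $(i,\mathcal H_0,\mathcal H)$; if one prefers, this entire completeness argument can be replaced by a direct appeal to the classical Wiener–Itô chaos theorem in the form of Da Prato–Zabczyk, Theorems 9.1.5 and 9.1.7.
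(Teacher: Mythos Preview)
The paper does not actually prove this lemma: it simply cites Theorems 9.1.5 and 9.1.7 of Da Prato--Zabczyk and Lemma 3.1 in Chapter 9 of Chow, and closes with a $\Box$. Your proposal therefore goes beyond what the paper offers, and it is correct. The reduction to i.i.d.\ standard Gaussians via the covariance computation, the product argument for orthonormality, and the martingale-convergence plus tensor-basis argument for completeness are all standard and sound; you also rightly flag that the step $\mathcal F_\infty=\mathcal B(\mathcal H)\bmod\mu$ is where the basis property of $\{\varphi_k\}$ (equivalently, the abstract Wiener space structure) is genuinely used. Your closing remark that one may instead invoke Da Prato--Zabczyk directly is in fact exactly what the paper does, so in that sense your approach and the paper's coincide---you have simply unpacked the cited result.
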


\subsection*{Spectral decomposition of the Ornstein-Uhlenbeck semigroup}

Consider the linear stochastic equation 
\begin{align}
 du_t&=Au_tdt+dW_t,\label{OU}\\
 u_0&=h\in \mathcal{H}.\nonumber
\end{align}
Here, as before  $A:\mathcal{D}(A)\subset \mathcal{H}\rightarrow \mathcal{H}$ is the infinitesimal generator of a strongly 
continuous semigroup $e^{tA}$ in $\mathcal{H}$. 
$W_t$ is a $Q$-Wiener process in $\mathcal{H}$.

Chow in \cite[Lemma 9.4.1]{liu} has shown the following result.
\begin{lemma}
Suppose that $A$ and $Q$ satisfy the following: 
\begin{enumerate}
 \item $A:\mathcal{D}(A)\subset \mathcal{H}\rightarrow \mathcal{H}$ is self-adjoint and there is $\beta>0$ such that
 \[
  \langle Av,v\rangle_ \mathcal{H}\le -\beta\|v \|_\mathcal{H}\quad \forall v\in \mathcal{H}.
 \]
\item $A$ commutes with $Q$ in $\mathcal{D}(A)\subset \mathcal{H}$.
\end{enumerate}
Then \eqref{OU} has a unique invariant measure $\mu$ which is a Gaussian measure on $ \mathcal{H}$ with zero mean and covariance 
operator $\Lambda=\tfrac{1}{2}Q(-A)^{-1}=\tfrac{1}{2}(-A)^{-1}Q$.\hfill$\Box$
\end{lemma}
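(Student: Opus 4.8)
The plan is to realize $\mu$ as the law of the stationary Ornstein--Uhlenbeck process determined by \eqref{OU} and to identify its covariance operator by an explicit integral computation that uses hypotheses (1)--(2). First I would recall the general theory (Da Prato--Zabczyk \cite{da-za}, Ch.~5): since $A$ generates the $C_0$-semigroup $e^{tA}$ and $W$ is a $Q$-Wiener process, \eqref{OU} has a unique mild solution
\[
u_t = e^{tA}h + Z_t,\qquad Z_t := \int_0^t e^{(t-s)A}\,dW_s,
\]
where $Z_t$ is a centered Gaussian random variable in $\mathcal H$ with covariance operator $Q_t=\int_0^t e^{rA}Q e^{rA^*}\,dr$. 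By hypothesis (1) the self-adjoint operator $A$ is negative definite with $\sup\sigma(A)\le-\beta$, so $e^{rA^*}=e^{rA}$ and $\|e^{rA}\|_{\mathcal L(\mathcal H)}\le e^{-\beta r}$, while hypothesis (2) gives $e^{rA}Q=Qe^{rA}$ for every $r\ge0$. Hence
\[
Q_t=\int_0^t e^{2rA}Q\,dr = Q\int_0^t e^{2rA}\,dr .
\]

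The key computation is $\int_0^\infty e^{2rA}\,dr=\tfrac12(-A)^{-1}$: on $\mathcal D(A)$ one has $\tfrac{d}{dr}e^{2rA}=2Ae^{2rA}$, and integrating from $0$ to $\infty$ while using $e^{2rA}\to0$ yields $2A\int_0^\infty e^{2rA}\,dr=-I$, i.e.\ $\int_0^\infty e^{2rA}\,dr=\tfrac12(-A)^{-1}$, the operator $(-A)^{-1}$ being bounded with norm $\le\beta^{-1}$. Since $\|e^{2rA}\|\le e^{-2\beta r}$ is integrable and $Q$ is nuclear, $Q_t\to Q_\infty:=Q\int_0^\infty e^{2rA}\,dr$ as $t\to\infty$ in trace norm, with $\|Q_\infty-Q_t\|_1\le \mathrm{Tr}(Q)\,\tfrac{e^{-2\beta t}}{2\beta}$. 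Therefore $Q_\infty=\tfrac12 Q(-A)^{-1}=\tfrac12(-A)^{-1}Q$ by (2); it is self-adjoint, nonnegative (product of commuting nonnegative self-adjoint operators) and nuclear with $\mathrm{Tr}(Q_\infty)\le\tfrac1{2\beta}\mathrm{Tr}(Q)<\infty$, so $\mu:=\mathcal N(0,Q_\infty)$ is a genuine centered Gaussian measure on $\mathcal H$ and $\Lambda=Q_\infty$ is exactly the claimed covariance.

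It remains to show that $\mu$ is the unique invariant measure. For existence I would take $u_0\sim\mu$ independent of $W$; then $u_t$ is centered Gaussian with covariance $e^{tA}Q_\infty e^{tA^*}+Q_t$, and the commutation and semigroup identities give $e^{tA}Q_\infty e^{tA}+Q_t = Q\int_t^\infty e^{2rA}\,dr+Q\int_0^t e^{2rA}\,dr = Q_\infty$, so the law of $u_t$ is again $\mu$. For uniqueness I would take any invariant probability measure $\nu$ and write the characteristic functional after one time step,
\[
\hat\nu(\lambda)=\hat\nu\big(e^{tA^*}\lambda\big)\,\exp\!\Big(-\tfrac12\langle Q_t\lambda,\lambda\rangle_{\mathcal H}\Big),\qquad\lambda\in\mathcal H,
\]
and let $t\to\infty$: since $\|e^{tA^*}\lambda\|_{\mathcal H}\le e^{-\beta t}\|\lambda\|_{\mathcal H}\to0$ and $\hat\nu$ is continuous with $\hat\nu(0)=1$, this forces $\hat\nu(\lambda)=\exp\!\big(-\tfrac12\langle Q_\infty\lambda,\lambda\rangle_{\mathcal H}\big)$, i.e.\ $\nu=\mu$.

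The main obstacle is the handful of analytic justifications underlying the computation: legitimizing $\tfrac{d}{dr}e^{2rA}=2Ae^{2rA}$ and its integration against the unbounded generator $A$, and upgrading the convergence $Q_t\to Q_\infty$ to trace norm so that $\mathcal N(0,Q_t)\Rightarrow\mathcal N(0,Q_\infty)$. Both follow from the uniform exponential bound $\|e^{2rA}\|\le e^{-2\beta r}$ supplied by hypothesis (1) together with the nuclearity of $Q$; everything else is bookkeeping inside the Da Prato--Zabczyk framework for linear SPDEs, which we may invoke directly.
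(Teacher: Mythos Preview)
Your argument is correct and is precisely the standard route: compute the covariance $Q_t=\int_0^t e^{rA}Qe^{rA^*}\,dr$, use self-adjointness and commutation to reduce it to $Q\int_0^t e^{2rA}\,dr$, evaluate the limit $\int_0^\infty e^{2rA}\,dr=\tfrac12(-A)^{-1}$ via the exponential decay from hypothesis~(1), and then verify invariance and uniqueness by the characteristic-functional argument. The analytic justifications you flag (differentiating $e^{2rA}$, trace-norm convergence) are handled exactly as you say, by the bound $\|e^{rA}\|\le e^{-\beta r}$ and nuclearity of $Q$.

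Note, however, that the paper does \emph{not} supply its own proof of this lemma: it is quoted verbatim from Chow \cite[Lemma~9.4.1]{liu} and closed with a $\Box$. So there is nothing to compare against in the paper itself; your sketch simply reproduces the argument one finds in the cited reference (or equivalently in Da~Prato--Zabczyk). One small remark: the inequality in hypothesis~(1) is almost certainly meant to read $\langle Av,v\rangle_{\mathcal H}\le -\beta\|v\|_{\mathcal H}^2$ (the paper drops the square), and you have implicitly used it in that corrected form when asserting $\sup\sigma(A)\le-\beta$.
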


 We define the operator 
\begin{equation}
\label{P1s2.4}
\mathcal{A}_0 u= \frac{1}{2}Tr(QD^2u)+ \langle Ax, Du \rangle_\mathcal{H},\qquad\quad x\in\mathcal{H}
\end{equation}
and suppose that $-A$ and $Q$ have the same eigenfunctions $e_k$ with eigenvalues $\lambda_k$ y $\rho_k$ respectively. 
% With some aditional 
% hypothesis it is possible to obtain that $\Lambda=\frac{1}{2}Q(-A)^{-\tfrac{1}{2}}=\frac{1}{2}(-A)^{-\tfrac{1}{2}}Q$.

Then the operator $\mathcal{A}_0 $ satisfies the following result.
\begin{lemma}\label{s1.le2}
 Let $H_{\bm{n}}(h) $ be a Hermite polynomial functional given by \eqref{s1.2}. Then the following holds
\begin{equation}
\label{s2.5}
\mathcal{A}_0 H_{\bm{n}}(h)= -\lambda_{\bm{n}} H_{\bm{n}}(h),
\end{equation}
for any $\bm{n}\in \mathcal{J} $ and $h\in \mathcal{H} $ and where
\begin{equation*}
\lambda_{\bm{n}}:=\sum_{k=1}^\infty n_k\lambda_k.
\end{equation*}
\hfill$\Box$
\end{lemma}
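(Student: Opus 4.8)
The plan is to reduce the identity to the classical one-dimensional Hermite differential equation applied coordinate by coordinate. Since $\bm{n}\in\mathcal{J}$ has $|\bm{n}|<\infty$, only finitely many $n_k$ are nonzero, and because $P_0\equiv 1$ the functional $H_{\bm{n}}$ depends on only finitely many of the coordinates $x_k:=\langle h,e_k\rangle_\mathcal{H}$; hence every series appearing below is a finite sum and no convergence issue arises. I take the basis system $\{\varphi_k\}$ to be the common eigenfunction system $\{e_k\}$ of $-A$ and $Q$, so that $l_k(h)=\langle h,\Lambda^{-1/2}e_k\rangle_\mathcal{H}$. From the identity $\Lambda=\tfrac{1}{2}Q(-A)^{-1}$ together with $(-A)e_k=\lambda_k e_k$ and $Qe_k=\rho_k e_k$, the operator $\Lambda$ is diagonal in this basis with $\Lambda e_k=\tfrac{\rho_k}{2\lambda_k}e_k$, hence $\Lambda^{-1/2}e_k=c_k e_k$ with $c_k:=\sqrt{2\lambda_k/\rho_k}$ and therefore $l_k(h)=c_k x_k$, i.e. $H_{\bm{n}}(h)=\prod_k P_{n_k}(c_k x_k)$.

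Next I compute $\mathcal{A}_0 H_{\bm{n}}$ coordinatewise. The Fréchet derivatives are diagonal in the $e_k$-basis: $\partial_j H_{\bm{n}}=c_j P'_{n_j}(c_j x_j)\prod_{k\neq j}P_{n_k}(c_k x_k)$ and $\partial_j^2 H_{\bm{n}}=c_j^2 P''_{n_j}(c_j x_j)\prod_{k\neq j}P_{n_k}(c_k x_k)$, while mixed second derivatives never enter because $Q$ is diagonal. Thus $\mathrm{Tr}(QD^2 H_{\bm{n}})=\sum_j \rho_j\,\partial_j^2 H_{\bm{n}}$, and since $A$ is self-adjoint with $Ae_j=-\lambda_j e_j$ we get $\langle Ax,DH_{\bm{n}}\rangle_\mathcal{H}=-\sum_j\lambda_j x_j\,\partial_j H_{\bm{n}}$. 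Writing $y_j:=c_j x_j$ and collecting the $j$-th term, the crucial cancellation is $\tfrac12\rho_j c_j^2=\tfrac12\rho_j\cdot\tfrac{2\lambda_j}{\rho_j}=\lambda_j$ together with $x_j c_j=y_j$, so that term equals $\lambda_j\big[P''_{n_j}(y_j)-y_j P'_{n_j}(y_j)\big]\prod_{k\neq j}P_{n_k}(y_k)$.

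Finally I invoke the Hermite ODE: since $P_n$ is a constant multiple of the probabilists' Hermite polynomial, it satisfies $P_n''(y)-y P_n'(y)+n P_n(y)=0$. Hence the $j$-th term reduces to $-\lambda_j n_j H_{\bm{n}}(h)$, and summing over the finitely many relevant indices $j$ yields $\mathcal{A}_0 H_{\bm{n}}=-\big(\sum_j \lambda_j n_j\big)H_{\bm{n}}=-\lambda_{\bm{n}}H_{\bm{n}}$. The only delicate point is bookkeeping about domains rather than algebra: $l_k$, and hence $H_{\bm{n}}$, is literally defined only on $\mathcal{H}_0$, so the identity should be read as holding pointwise for $h\in\mathcal{H}_0$ and, through the $\mu$-a.e. extension of the $l_k$'s mentioned in the Remark, $\mu$-a.e. on $\mathcal{H}$; one also notes that $H_{\bm{n}}$ is a smooth cylindrical functional, hence in the domain of $\mathcal{A}_0$, which legitimizes the termwise differentiation. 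I expect this domain discussion, not the computation, to be the part that needs the most care.
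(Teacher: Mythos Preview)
Your computation is correct and is essentially the standard argument: diagonalize in the common eigenbasis $\{e_k\}$, reduce $H_{\bm n}$ to a finite product in the coordinates $y_j=c_jx_j$ with $c_j=\sqrt{2\lambda_j/\rho_j}$, and then apply the one-dimensional probabilists' Hermite ODE $P_n''-yP_n'+nP_n=0$ coordinate by coordinate, the key algebraic point being $\tfrac12\rho_jc_j^2=\lambda_j$. The paper does not give its own proof of this lemma but simply cites Chow \cite[Lemma 9.4.3]{liu} and \cite{liu2}; the argument there is precisely this coordinatewise reduction, so your proposal matches the referenced proof.
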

The proof can be found in \cite[Lemma 9.4.3]{liu} or \cite{liu2}.

Using lemmas \ref{s1.le2} and \ref{s1.le1}, ($\{H_{\bm{n}}\}$ forms a complete orthonormal system for 
$L^2(\mathcal{H},\mu)$) we can write
\begin{equation}
\label{s3.1}
u(t,x) =  \sum_{\bm{n}\in \mathcal{J}} u_{\bm{n}}(t) H_{\bm{n}}(x), \qquad x\in\mathcal{H},\quad t\in[0,T],
\end{equation}
where $u_{\bm{n}}:[0,T]\mapsto \IR$ and $H_{\bm{n}}(x)$ are the Hermite functionals.

\begin{remark}
 The decomposition given in \eqref{s3.1} is a deterministic version to the Wiener Chaos expansion (WCe), also known as a 
 Fourier-Hermite series. The WCe has been used to prove several results in stochastic analysis and also it has been 
 applied to solve numerically stochastic partial differential equations (see for instance Lototsky and Rozovskii \cite{lo-ro}, 
 Lototsky \cite{lo},  Hou et. al. \cite{ho-lu-ro-zh}).
\end{remark}

Notice that the Kolmogorov equation can be writen as
\begin{align}
\frac{\partial u}{\partial t}&= \frac{1}{2}Tr(QD^2u)+ \langle Ax, Du \rangle_\mathcal{H} + \langle B(x),Du \rangle_\mathcal{H}\nonumber\\
&= \mathcal{A}_0u  + \langle B(x),Du \rangle_\mathcal{H}.\label{s3.2}
\end{align}
Using \eqref{s3.1}, we calculate 
\begin{align*}
\frac{\partial u}{\partial t}&=  \sum_{\bm{n}\in \mathcal{J}} \dot{u}_{\bm{n}}(t) H_{\bm{n}}(x)\\
\mathcal{A}_0u  &= \mathcal{A}_0 \sum_{\bm{n}\in \mathcal{J}} u_{\bm{n}}(t) H_{\bm{n}}(x) =  \sum_{\bm{n}\in \mathcal{J}} u_{\bm{n}}(t) \mathcal{A}_0H_{\bm{n}}(x)\\
&=  -\sum_{\bm{n}\in \mathcal{J}} u_{\bm{n}}(t) \lambda_{\bm{n}} H_{\bm{n}}(x)
\end{align*}
Where in the last equality we have used the Lemma \ref{s1.le2}.

For the last term in \eqref{s3.2} we have
\begin{align*}
 \langle B(x),Du \rangle_\mathcal{H} &=  \Big\langle B(x),D_x \sum_{\bm{n}\in \mathcal{J}} u_{\bm{n}}(t) H_{\bm{n}}(x) \Big\rangle_\mathcal{H}\\
 &= \sum_{\bm{n} \in \mathcal{J}} u_{\bm{n}}(t) \big\langle B(x),D_x  H_{\bm{n}}(x) \big\rangle_\mathcal{H},
\end{align*}
where $D_x$ is the Fr\'echet derivative.

Therefore the Kolmogorov equation becomes
\begin{align*}
\sum_{\bm{n}\in \mathcal{J}} \dot{u}_{\bm{n}}(t) H_{\bm{n}}(x)&= -\sum_{\bm{n}\in \mathcal{J}} u_{\bm{n}}(t) \lambda_{\bm{n}} H_{\bm{n}}(x) 
+ \sum_{\bm{n} \in \mathcal{J}} u_{\bm{n}}(t) \big\langle B(x),D_x  H_{\bm{n}}(x) \big\rangle_\mathcal{H}
\end{align*}

Multiplying by $H_{\bm{m}}(x) $, $\bm{m}\in\mathcal{J} $ and integrating in $ \mathcal{H}$ w.r.t $\mu(dx)$ we have
\begin{align*}
\sum_{\bm{n}\in \mathcal{J}} \dot{u}_{\bm{n}}(t)\int_{\mathcal{H}} H_{\bm{m}}(x)  H_{\bm{n}}(x)\mu(dx) &= -\sum_{\bm{n}\in \mathcal{J}} u_{\bm{n}}(t) 
\lambda_{\bm{n}} \int_{\mathcal{H}} H_{\bm{m}}(x) H_{\bm{n}}(x) \mu(dx)\\
& + \sum_{\bm{n}\in \mathcal{J}} u_{\bm{n}}(t)  \int_{\mathcal{H}} H_{\bm{m}}(x) \big\langle B(x),D_x  H_{\bm{n}}(x) \big\rangle_\mathcal{H} \mu(dx)
\end{align*}

 From this, and using the orthogonality of the system $\{H_{\bm{m}}(x) \}$ we get the infinite system of coupled ordinary differential equations
\begin{equation}
   \dot{u}_{\bm{m}}(t) =   -u_{\bm{m}}(t) \lambda_{\bm{m}} 
  + \sum_{\bm{n}\in \mathcal{J}} u_{\bm{n}}(t) C_{\bm{n},\bm{m}},\qquad \bm{n},\bm{m}\in\mathcal{J}\label{inf-sys}
\end{equation}
where $ C_{\bm{n},\bm{m}}$ is given by
\begin{equation}
C_{\bm{n},\bm{m}}:=\int_\mathcal{H} \big\langle B(x), D_x H_{\bm{n}}(x) \big\rangle_\mathcal{H} H_{\bm{m}}(x) \mu(dx).\label{C-NM}
 \end{equation}
We focus on $C_{\bm{n},\bm{m}}$. Since $ H_{\bm{n}}(x)=  \prod_{i=1}^\infty P_{n_i}\big(\langle x,\Lambda^{-\tfrac{1}{2}}
e_i\rangle_\mathcal{H}  \big)$ we get
\begin{align*}
D_x H_{\bm{n}}(x)&=  \sum_{k=1}^\infty\prod_{\stackrel{i=1}{i\ne k}}^\infty P_{n_i}\big(\langle x,\Lambda^{-\tfrac{1}{2}} 
e_i\rangle_\mathcal{H}\big) P_{n_k}^{'} \big(\langle x,\Lambda^{-\tfrac{1}{2}} e_k\rangle_\mathcal{H}\big) 
\Lambda^{-\tfrac{1}{2}} e_k,
% & = \sum_{k=1}^\infty \frac{H_{\bm{n}}(x)}{ P_{n_k}\big(\langle x,\Lambda^{-\tfrac{1}{2}} e_k\rangle_\mathcal{H}\big) }  
% P_{n_k}^{'}\big(\langle x,\Lambda^{-\tfrac{1}{2}} e_k\rangle_\mathcal{H}\big) \Lambda^{-\tfrac{1}{2}} e_k
\end{align*}
then
\begin{align*}
 \big\langle B(x),  D_x H_{\bm{n}}(x)  \big\rangle_\mathcal{H}&= \sum_{k=1}^\infty \Big\langle B(x),
 \Lambda^{-\tfrac{1}{2}} e_k\Big\rangle_\mathcal{H}  \prod_{\stackrel{i=1}{i\ne k}}^\infty P_{n_i}\big(\langle x,
 \Lambda^{-\tfrac{1}{2}} e_i\rangle_\mathcal{H}\big) P_{n_k}^{'} \big(\langle x,\Lambda^{-\tfrac{1}{2}} e_k\rangle_\mathcal{H}\big)
% &= \sum_{k=1}^\infty \Big\langle B(x),  \Lambda^{-\tfrac{1}{2}} e_k\Big\rangle_\mathcal{H} 
%  \frac{ H_{\bm{n}}(x)}{ P_{n_k}\big(\langle x,\Lambda^{-\tfrac{1}{2}} e_k\rangle_\mathcal{H}\big) }
%  P_{n_k}^{'} \big(\langle x,\Lambda^{-\tfrac{1}{2}} e_k\rangle_\mathcal{H}\big) \\
% &= H_{\bm{n}}(x) \sum_{k=1}^\infty \Big\langle B(x),  \Lambda^{-\tfrac{1}{2}} e_k\Big\rangle_\mathcal{H} 
%  \frac{P_{n_k}^{'} \big(\langle x,\Lambda^{-\tfrac{1}{2}} e_k\rangle_\mathcal{H}\big) }
%  { P_{n_k}\big(\langle x,\Lambda^{-\tfrac{1}{2}} e_k\rangle_\mathcal{H}\big) }
  \end{align*}
Thus, 
\begin{align*}
 C_{\bm{n},\bm{m}}=\int_\mathcal{H} \sum_{k=1}^\infty \Big\langle B(x),  \Lambda^{-\tfrac{1}{2}} e_k\Big\rangle_\mathcal{H} 
 \prod_{\stackrel{i=1}{i\ne k}}^\infty P_{n_i}\big(\langle x,  \Lambda^{-\tfrac{1}{2}} e_i\rangle_\mathcal{H}\big) P_{n_k}^{'} 
 \big(\langle x,\Lambda^{-\tfrac{1}{2}} e_k\rangle_\mathcal{H}\big) H_{\bm{m}}(x) \mu(dx).
 \end{align*}
 
 \subsubsection*{A technical result}
 The following result is important for the numerical simulation since it will allow us to use the evaluation functional on the 
 Hilbert space $\mathcal{H}$.
 
\begin{lemma}
i) The Gaussian measure $\mu$ on $\mathcal{H}=L^{2}\left(  0,1\right)  $ with
covariance $\Lambda=\frac{1}{2}\left(  -A\right)  ^{-1}$ is supported on
$C\left(  \left[  0,1\right]  \right)  $.

ii)\ Let $\xi_{0}\in\left[  0,1\right]  $ be given. Let $u_{0}:C\left(
\left[  0,1\right]  \right)  \rightarrow\mathbb{R}$ be defined as
$u_{0}\left(  x\right)  =x\left(  \xi_{0}\right)  $. Then%
\[
\int_{\mathcal{H}}u_{0}^{2}\left(  x\right)  \mu\left(  dx\right)  <\infty
\]
(and therefore $\sum_{m}\left(  u_{m}^{0}\right)  ^{2}<\infty$).
\end{lemma}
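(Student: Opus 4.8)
The plan is to use the explicit spectral representation of $\mu$. By the lemma above on the invariant measure of the Ornstein--Uhlenbeck equation \eqref{OU}, $\mu$ is the centered Gaussian measure on $\mathcal H=L^2(0,1)$ with covariance $\Lambda=\tfrac12(-A)^{-1}$. Writing $-Ae_k=\lambda_ke_k$ and $\gamma_k:=\tfrac1{2\lambda_k}$, this means that $\mu$ is the law of the random series
\[
X \;=\; \sum_{k\ge1}\sqrt{\gamma_k}\,g_k\,e_k \qquad\text{in }L^2(0,1),
\]
with $(g_k)_{k\ge1}$ i.i.d.\ $N(0,1)$; the series converges in $L^2(\Omega;L^2(0,1))$ since $\sum_k\gamma_k=\mathrm{Tr}(\Lambda)<\infty$. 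For the one-dimensional diffusion $e_k(\xi)=\sqrt2\sin(\pi k\xi)$, $\lambda_k=\pi^2k^2$, and only $\sup_k\|e_k\|_{L^\infty}<\infty$ together with $\lambda_k\ge ck^2$ will be used.

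For (i), regard $\xi\mapsto X(\xi)$ (the $L^2(\Omega)$-limit of $\sum_{k\le N}\sqrt{\gamma_k}g_ke_k(\xi)$, well defined because $\sum_k\gamma_ke_k(\xi)^2\le\sum_k(\pi k)^{-2}<\infty$) as a centered Gaussian process on $[0,1]$. Using $|\sin\pi k\xi-\sin\pi k\eta|\le\min\{2,\pi k|\xi-\eta|\}$ and splitting the sum over $k$ at $k\approx|\xi-\eta|^{-1}$ one obtains
\[
\E\,|X(\xi)-X(\eta)|^2=\sum_{k\ge1}\gamma_k\big(e_k(\xi)-e_k(\eta)\big)^2\;\le\;C\,|\xi-\eta|,
\]
hence, by the Gaussian moment identities, $\E\,|X(\xi)-X(\eta)|^{2p}\le C_p\,|\xi-\eta|^{p}$ for every $p\ge1$. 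Kolmogorov's continuity criterion then yields a modification $\widetilde X$ of $X$ with a.s.\ continuous (in fact H\"older) paths; since $\widetilde X(\xi)=X(\xi)$ a.s.\ for every $\xi$, Fubini gives $\widetilde X=X$ as elements of $L^2(0,1)$ almost surely, so $\mu=\mathrm{Law}(X)=\mathrm{Law}(\widetilde X)$ satisfies $\mu\big(C([0,1])\big)=1$. (More slickly, $X$ equals $2^{-1/2}$ times the Karhunen--Loeve expansion of a Brownian bridge on $[0,1]$, which is continuous.) Since $C([0,1])\hookrightarrow L^2(0,1)$ is a continuous injection of Polish spaces, its image is Borel by the Lusin--Souslin theorem, so the statement that $\mu$ is supported on $C([0,1])$ is meaningful.

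For (ii), once (i) holds the evaluation $u_0(x)=x(\xi_0)$ is defined $\mu$-a.e., and under $\mu$ the real variable $u_0(X)=X(\xi_0)$ is centered Gaussian with
\[
\int_{\mathcal H}u_0^2\,d\mu=\E\,X(\xi_0)^2=\sum_{k\ge1}\gamma_k\,e_k(\xi_0)^2=\sum_{k\ge1}\frac{\sin^2(\pi k\xi_0)}{\pi^2k^2}\;\le\;\sum_{k\ge1}\frac1{\pi^2k^2}=\frac16<\infty,
\]
so $u_0\in\mathbb H=L^2(\mathcal H,\mu)$. The parenthetical claim then follows from Lemma \ref{s1.le1}: since $\{H_{\bm m}\}$ is a complete orthonormal system of $\mathbb H$, Parseval gives $\sum_{\bm m}(u^0_{\bm m})^2=\|u_0\|_{\mathbb H}^2=\int_{\mathcal H}u_0^2\,d\mu<\infty$, where $u^0_{\bm m}:=\langle u_0,H_{\bm m}\rangle_{\mathbb H}$ are the coefficients of $u_0$ in \eqref{s3.1} at $t=0$. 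The only genuinely delicate point is (i): controlling the Gaussian increments and passing from the pointwise series to a bona fide $C([0,1])$-valued random element with law $\mu$; part (ii) is then an elementary variance bound followed by Bessel/Parseval.
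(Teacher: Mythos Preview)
Your proof is correct and follows the same overall strategy as the paper---estimate $\E|X(\xi)-X(\eta)|^2\le C|\xi-\eta|$, lift to higher moments via Gaussianity, and invoke Kolmogorov's continuity criterion---but the two differ in how the increment bound is obtained. The paper writes down the explicit Green's kernel of $\Lambda=\tfrac12(-A)^{-1}$, namely $\lambda(\xi,\xi')=\tfrac12\big[\xi(1-\xi')-(\xi-\xi')1_{\xi'\le\xi}\big]$, identifies it as the covariance of the canonical process, and reads off the Lipschitz bound directly from that formula; for part (ii) it then gets $\int u_0^2\,d\mu=q(\xi_0,\xi_0)=\tfrac12\xi_0(1-\xi_0)$ exactly. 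You instead work through the eigenfunction series $\sum_k\gamma_k(e_k(\xi)-e_k(\eta))^2$ and split at $k\sim|\xi-\eta|^{-1}$, and for (ii) bound the spectral sum by $\sum_k(\pi k)^{-2}$. Both computations are valid and yield the same exponent; the kernel route makes the Brownian-bridge identification (which you also mention) immediate, while your spectral route fits the paper's overall theme and generalizes with only $\sup_k\|e_k\|_\infty<\infty$ and $\lambda_k\gtrsim k^2$ as input. Your added care about the Borel measurability of $C([0,1])\subset L^2(0,1)$ and about identifying the continuous modification with the original $L^2$-valued variable are legitimate technical points the paper glosses over.
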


\begin{proof}
Recall that $Af=f^{\prime\prime}$, $D\left(  A\right)  =H^{2}\left(
0,1\right)  \cap H_{0}^{1}\left(  0,1\right)  $. By solving the two-point
boundary value problem $f^{\prime\prime}=g$, $f\left(  0\right)  =f\left(
1\right)  =0$, after several manipulations one can show that
\[
\left(  \Lambda h\right)  \left(  \xi\right)  =\int_{0}^{1}\lambda\left(
\xi,\xi^{\prime}\right)  h\left(  \xi^{\prime}\right)  d\xi^{\prime},\qquad
h\in\mathcal{H}%
\]
where
\[
\lambda\left(  \xi,\xi^{\prime}\right)  =\frac{1}{2}\left[  \left(  \xi\left(
1-\xi^{\prime}\right)  \right)  -\left(  \xi-\xi^{\prime}\right)
1_{\xi^{\prime}\leq\xi}\right]  .
\]
The reader may more easily get convinced that this is correct a posteriori, by
showing that $\frac{d^{2}}{d\xi^{2}}\int_{0}^{1}\lambda\left(  \xi,\xi
^{\prime}\right)  f\left(  \xi^{\prime}\right)  d\xi^{\prime}=-\frac{1}%
{2}f\left(  \xi\right)  $ and that $\left(  \Lambda h\right)  \left(
0\right)  =\left(  \Lambda h\right)  \left(  1\right)  =0$. 

Consider the canonical process $\left(  X_{\xi}\right)  _{\xi\in\left[
0,1\right]  }$:$\left(  \mathcal{H},\mathcal{B}\left(  \mathcal{H}\right)
,\mu\right)  \rightarrow\left(  \mathbb{R},\mathcal{B}\left(  \mathbb{R}%
\right)  \right)  $ defined for a.e. $\xi\in\left[  0,1\right]  $ as $X_{\xi
}\left(  x\right)  =x\left(  \xi\right)  $ and denote by $E$ the mathematical
expectation on $\left(  \mathcal{H},\mathcal{B}\left(  \mathcal{H}\right)
,\mu\right)  $. The process $X$ has zero mean. One can prove that%
\[
Cov\left(  X_{\xi},X_{\xi^{\prime}}\right)  =q\left(  \xi,\xi^{\prime}\right)
,\qquad\text{a.e. }\xi,\xi^{\prime}\in\left[  0,1\right]  .
\]
Indeed, since $\left\langle \Lambda h,k\right\rangle _{\mathcal{H}}%
=\int_{\mathcal{H}}\left\langle x,h\right\rangle _{\mathcal{H}}\left\langle
x,k\right\rangle _{\mathcal{H}}\mu\left(  dx\right)  $, we have
\begin{align*}
\int_{0}^{1}\int_{0}^{1}q\left(  \xi,\xi^{\prime}\right)  h\left(  \xi
^{\prime}\right)  k\left(  \xi\right)  d\xi d\xi^{\prime}  & =\int%
_{\mathcal{H}}\int_{0}^{1}x\left(  \xi^{\prime}\right)  h\left(  \xi^{\prime
}\right)  d\xi^{\prime}\int_{0}^{1}x\left(  \xi\right)  k\left(  \xi\right)
d\xi\mu\left(  dx\right)  \\
& =\int_{0}^{1}\int_{0}^{1}\left(  \int_{\mathcal{H}}x\left(  \xi^{\prime
}\right)  x\left(  \xi\right)  \mu\left(  dx\right)  \right)  h\left(
\xi^{\prime}\right)  k\left(  \xi\right)  d\xi d\xi^{\prime}\\
& =\int_{0}^{1}\int_{0}^{1}E\left[  X_{\xi^{\prime}}X_{\xi}\right]  h\left(
\xi^{\prime}\right)  k\left(  \xi\right)  d\xi d\xi^{\prime}%
\end{align*}
and the formula for $Cov\left(  X_{\xi},X_{\xi^{\prime}}\right)  $ follows
from the arbitrarity of $h$ and $k$. 

The paths of the process $X$ are obviously of class $L^{2}\left(  0,1\right)
$; there is a continuous modification of $X$ if and only if $\mu$ is supported on
$C\left(  \left[  0,1\right]  \right)  $. If we check the condition%
\begin{equation}
E\left[  \left\vert X_{\xi}-X_{\xi^{\prime}}\right\vert ^{2}\right]  \leq
C\left\vert \xi-\xi^{\prime}\right\vert ^{\alpha}\label{Kolm}%
\end{equation}
for some $\alpha,C>0$, then, by gaussianity,
\[
E\left[  \left\vert X_{\xi}-X_{\xi^{\prime}}\right\vert ^{p}\right]  \leq
C_{p}\left\vert \xi-\xi^{\prime}\right\vert ^{\alpha p/2}%
\]
for every $p\geq1$ and for a suitable constant $C_{p}>0$, hence there is a
continuous modification by Kolmogorov criterion. But
\begin{align*}
E\left[  \left\vert X_{\xi}-X_{\xi^{\prime}}\right\vert ^{2}\right]    &
=E\left[  X_{\xi}^{2}\right]  +E\left[  X_{\xi^{\prime}}^{2}\right]
-2E\left[  X_{\xi}X_{\xi^{\prime}}\right]  \\
& =q\left(  \xi,\xi\right)  +q\left(  \xi^{\prime},\xi^{\prime}\right)
-2q\left(  \xi,\xi^{\prime}\right)  \\
& =q\left(  \xi,\xi\right)  -q\left(  \xi,\xi^{\prime}\right)  \\
& +q\left(  \xi^{\prime},\xi^{\prime}\right)  -q\left(  \xi,\xi^{\prime
}\right)  .
\end{align*}
We have%
\begin{align*}
& q\left(  \xi,\xi\right)  -q\left(  \xi,\xi^{\prime}\right)  \\
& =\frac{1}{2}\xi\left(  1-\xi\right)  -\frac{1}{2}\left[  \left(  \xi\left(
1-\xi^{\prime}\right)  \right)  -\left(  \xi-\xi^{\prime}\right)
1_{\xi^{\prime}\leq\xi}\right]  \\
& \leq C\left\vert \xi-\xi^{\prime}\right\vert
\end{align*}
and similarly $q\left(  \xi^{\prime},\xi^{\prime}\right)  -q\left(  \xi
,\xi^{\prime}\right)  \leq C\left\vert \xi-\xi^{\prime}\right\vert $. Hence
condition (\ref{Kolm}) is satisfied with $\alpha=1$. We have proved that $\mu$
is supported on $C\left(  \left[  0,1\right]  \right)  $.

We also have%
\[
\int_{\mathcal{H}}u_{0}^{2}\left(  x\right)  \mu\left(  dx\right)
=\int_{\mathcal{H}}x^{2}\left(  \xi_{0}\right)  \mu\left(  dx\right)
=E\left[  X_{\xi_{0}}^{2}\right]  =q\left(  \xi_{0},\xi_{0}\right)  <\infty.
\]
The proof is complete. 
\end{proof}

\begin{remark}
To convince ourselves, a more concise but a little formal proof of the claim
$\int_{\mathcal{H}}u_{0}^{2}\left(  x\right)  \mu\left(  dx\right)  <\infty$
is%
\[
\int_{\mathcal{H}}u_{0}^{2}\left(  x\right)  \mu\left(  dx\right)
=\int_{\mathcal{H}}\left\langle x,\delta_{\xi_{0}}\right\rangle _{\mathcal{H}%
}^{2}\mu\left(  dx\right)  =\left\langle \Lambda\delta_{\xi_{0}},\delta
_{\xi_{0}}\right\rangle _{\mathcal{H}}=\frac{1}{2}\left\Vert \left(
-A\right)  ^{-1/2}\delta_{\xi_{0}}\right\Vert _{\mathcal{H}}^{2}<\infty
\]
because $\left(  -A\right)  ^{-1/2}\delta_{\xi_{0}}\in L^{2}\left(
0,1\right)  $, since by duality
\begin{align*}
\left\langle \left(  -A\right)  ^{-1/2}\delta_{\xi_{0}},f\right\rangle
_{\mathcal{H}}  & =\left\langle \delta_{\xi_{0}},\left(  -A\right)
^{-1/2}f\right\rangle _{\mathcal{H}}=\left(  \left(  -A\right)  ^{-1/2}%
f\right)  \left(  \xi_{0}\right)  \\
& \leq\left\Vert \left(  -A\right)  ^{-1/2}f\right\Vert _{L^{\infty}}\leq
C\left\Vert \left(  -A\right)  ^{-1/2}f\right\Vert _{H^{1}}\leq C\left\Vert
f\right\Vert _{L^{2}}%
\end{align*}
where we have used Sobolev embedding $H^{1}\subset L^{\infty}$ and the fact
that $\left(  -A\right)  ^{-1/2}$ maps $L^{2}$ into $H^{1}$.
\end{remark}

\section{Numerical approximation}\label{NA-sect}

Define the set of {\it finite} multi-index $J^{M,N}$ as

 \[
 \mathcal{J}^{M,N}=\Big\{\bm{\alpha}=(\alpha_i,1\le i\le M)\quad \big|\quad\alpha_i\in\{0,1,2,\ldots,N\} \Big\}
\]
this is the set of $M$-tuple wich can take values in the set $\{0,1,2,\ldots,N\} $. 
%For simplicity we wil take $M=N$ and we will denote by
%$\mathcal{J}^N=\mathcal{J}^{N,N}$.

We approximate the solutions of the Kolmogorov equation by the following expression
\begin{equation}
\label{s3.1.1}
 \hat{u}_N(t,x) =  \sum_{ \bm{n}\in \mathcal{J}^{M,N}} u_{\bm{n}}(t) H_{\bm{n}}(x), \qquad x\in\mathcal{H},\quad t\in[0,T],
\end{equation}
Notice the use of the finite $M$-tuple in oposition to the infinite multi-index $\mathcal{J}$ as in \eqref{s3.1}.
 
We truncate the infinite system \eqref{inf-sys} in the following sense. Consider the same value $M$ as in $J^{M,N}$  and $\bm{m}_1,\ldots,\bm{m}_M\in\mathcal{J}^{M,N}$ and
define the finite system of equations 
\begin{align}
  \dot{u}_{\bm{m}_i}(t) =   -u_{\bm{m}_i}(t) \lambda_{\bm{m}_i} 
  + \sum_{j=1}^M  u_{\bm{n}_j}(t) C_{\bm{n}_j,\bm{m}_i}.\qquad 1\le i\le M.\label{fin-sys}
\end{align}
Set the vectors
\begin{align*}
 U^M(t)&=\big( u_{\bm{m}_1}(t),u_{\bm{m}_2}(t),\ldots,u_{\bm{m}_M}(t) \big)^T\\
 \dot{U}^M(t)&=\big(\dot{u}_{\bm{m}_1}(t),\dot{u}_{\bm{m}_2}(t),\ldots,\dot{u}_{\bm{m}_M}(t) \big)^T
 \end{align*}
and the matrix 

\[ 
A=\left( \begin{array}{ccccc}
-\lambda_{1}+ C_{1,1} & C_{2,1} & \cdots & C_{M-1,1} & C_{M,1} \\
 C_{1,2} & -\lambda_{2}+C_{2,2} & \cdots & C_{M-1,2} & C_{M,2} \\
\vdots &\vdots&\ddots&\vdots&\vdots \\
C_{1,M-1} & C_{2,M-1} & \cdots & -\lambda_{M-1}+ C_{M-1,M-1} & C_{M,M-1} \\
C_{1,M} & C_{2,M} & \cdots & C_{M-1,M} & -\lambda_{M}+ C_{M,M} \\
\end{array} 
\right)
\] 
where $\lambda_{i}=\lambda_{\bm{m}_i}$ and $C_{i,j}=C_{\bm{n}_i,\bm{m}_j}$ for $1\le i,j\le M$. Notice that, given the expression 
\eqref{C-NM}, in general the matrix $A$ is not symmetric. We now can write the system \eqref{fin-sys} as a matrix differential
equation:
\begin{align}
 \dot{U}^M(t)=A U^M(t).\label{MDE}
\end{align}

Then, if $A$ has $M$ real and distint eigenvalues $\eta_i$ and $M$ eigenvectors $\vec{V}_i$ then the solution to the \eqref{MDE} is
given by
\begin{align}
U^M(t)=\sum_{i=1}^M c_i \vec{V}_ie^{\eta_it}.
\end{align}
 In the case when some of the eigenvalues and eigenvectors, or at least one of them, take values in the complex field we still can have real
 solutions. Indeed, Suppose that we have the case with one complex eigenvalue and eigenvector then it is know that we will have 
 $M-2$ real eigenvalues but we can obtain two real solutions from the complex eigenvalue(see \cite{go-sc} for instance).
 
 Let us write one of the complex eigenvalue and eigenvector as
 \begin{align*}
 \vec{V}&=\vec{a}+i\vec{b},\\ 
 \eta &=\gamma+i\mu,
 \end{align*}
then we can write two {\it real} solutions as follows:
\begin{align*}
 e^{\gamma t}\big(\vec{a}cos(\mu t)-\vec{b} sin(\mu t)\big),\qquad
  e^{\gamma t}\big(\vec{a}sin(\mu t)+\vec{b} cos(\mu t)\big).
\end{align*}

 \subsection{Initial Conditions}\label{ICsect} 

In contrast to several types of differential equations, whether ordinary or partial, deterministic or stochastic,
for FPK equations there is no standard way to determine the initial conditions. 
This is because in this type of equations we must choose a functional that acts on the initial condition, 
this implies that  depending on the functional chosen we must adapt the method. 
Here we present the method for two examples of functionals.

We will consider two cases :  
 \begin{align*}
  u_0^{z_0}(g)&:= g(z_0).\qquad \mbox{for fixed } z_0\in[0,1]\\
 &\mbox{\hspace*{-3cm} and }\\
 \vspace*{-0.1cm}  u_0(g)&:=\int_0^1 g(z) dz.\\
 \end{align*}

For the first functional, define the set points in the set $[a,b]$ as $\{z_i\}$, $i=0,\ldots,P$, such that 
$z_0=a$ and $z_{P}=b$. Then for each point $z_i$ we have that $X_0(z_i)=X(0,z_i)$, and for each $z_i$ set
$u_0(x)$ as the evaluation functional  $z_i\mapsto X_t^x(z_i)$ then from $u(t,x)=\E(u_0(X_t^x))$ we obtain
\[
u(0,x)=\E\big(u_0^{z_i}(X_0^x)\big)=X^x(0,z_i)=x(z_i),
\]
and at other hand
\[
u(0,x)=\sum_{\bm{n}\in \mathcal{J}^{M,N}} u_{\bm{n}}(0) H_{\bm{n}}(x),
\]
then for each $z_i$
\[
x(z_i)=u(0,x) =\sum_{\bm{n}\in \mathcal{J}^{M,N}} u_{\bm{n}}(0) H_{\bm{n}}(x)
\]
Then, multiplying by $ H_{\bm{m}}(x) $ and integrating in the Hilbert space $L^2(\mathcal{H},\mu)$ we have
\[
u_{\bm{m}}(0)=\int_{\mathcal{H}} x(z_i) H_{\bm{m}}(x) \mu(dx).
\]
Here the value of the initial condition $u_{\bm{m}}(0)$ depends on $z_i$, i.e. $u_{\bm{m}}(0)=u_{\bm{m}}^{z_i}(0) $. 

Notice that in the direction of the eigenfunction $e_k$ the expression $x$ can be writen as
$ \langle x, e_k\rangle_{\mathcal{H}} e_k$  and then we can write $H_{\bm{m}}(x) x(z_i) $ in the direction $e_k$ as
$   P_{m_k} \big(\xi_k\big) \langle x, e_k\rangle_{\mathcal{H}} e_k(z_i) $ with $\xi_k= \langle x, \Lambda^{-1/2} e_k\rangle_{\mathcal{H}}$. 
Furthermore, $\xi_k= \langle x, \Lambda^{-1/2} e_k\rangle_{\mathcal{H}}= |\lambda_k| \langle x, e_k\rangle_{\mathcal{H}}$ then we have

\begin{align}
 u_{\bm{m}}^{z_i}(0)&=\int_{\mathcal{H}} x(z_i) H_{\bm{m}}(x) \mu(dx)\nonumber\\
&= \int_{\IR^\IN} \sum_{k=1}^\infty e_k(z_i)\langle x, e_k\rangle_{\mathcal{H}} P_{m_k} \big(\xi_k\big)  
\mu(d\xi_1,d\xi_2,\cdots)e_k\nonumber\\
&= \int_{\IR^\IN} \sum_{k=1}^\infty e_k(z_i) \frac{\xi_k}{\lambda_k}P_{m_k} \big(\xi_k\big) \mu(d\xi_1,d\xi_2,\cdots)e_k\nonumber\\
&= \sum_{k=1}^\infty \frac{e_k(z_i)}{\lambda_k} \int_{\IR}  P_{m_k} \big(\xi_k\big) \xi_k \mu(d\xi_k)\nonumber\\
&\approx  \sum_{k=1}^M \frac{e_k(z_i)}{\lambda_k} \int_{\IR}  P_{m_k} \big(\xi_k\big) \xi_k \mu(d\xi_k)\label{ME-BC3}
\end{align}

Notice that the general solution to each $u_{\bm{m}}^{z_i}(0)$ is given by the expression

 \begin{equation*}
 \left( \begin{array}{c}
 u_1(t)\\
u_2(t) \\
\vdots \\
u_{M-1}(t) \\
u_M(t)\\
\end{array} 
\right)=\left(\mathbf{V}_1 \quad \mathbf{V}_2 \quad\cdots \quad\mathbf{V}_{M-1} \quad \mathbf{V}_M \right)
 \left( \begin{array}{c}
 c_1e^{\lambda_1 t }\\
 c_2e^{\lambda_2 t }\\
\vdots \\
 c_{M-1}e^{\lambda_{M-1} t }\\
 c_Me^{\lambda_M t }\\
\end{array} 
\right)
\end{equation*}
 where $\mathbf{V}_j$ and $\lambda_j$ are the eigenvector and eigenvalue of the matrix $A$ and we are denoting 
 $u_j(t)=u_{\bm{m}_j}^{z_i}(t)$,  $1\le j\le M$. Evaluating in $t=0$ we have 

 \begin{equation*}
 \left( \begin{array}{c}
 u_1(0)\\
u_2(0) \\
\vdots \\
u_{M-1}(0) \\
u_M(0)\\
\end{array} 
\right)=\Big(\mathbf{V}_1 \quad \mathbf{V}_2 \quad\cdots \quad\mathbf{V}_{M-1} \quad \mathbf{V}_M \Big)
 \left( \begin{array}{c}
 c_1\\
 c_2\\
\vdots \\
 c_{M-1}\\
 c_M\\
\end{array} 
\right),
\end{equation*} 
and therefore 
 \begin{equation*}
 \left( \begin{array}{c}
 c_1\\
 c_2\\
\vdots \\
 c_{M-1}\\
 c_M\\
\end{array} 
\right)=\Big(\mathbf{V}_1 \quad \mathbf{V}_2 \quad\cdots \quad\mathbf{V}_{M-1} \quad \mathbf{V}_M \Big)^{-1}
  \left( \begin{array}{c}
 u_1(0)\\
u_2(0) \\
\vdots \\
u_{M-1}(0) \\
u_M(0)\\
\end{array} 
\right),
\end{equation*} 
with $u_j(t)=u_{\bm{m}_j}^{z_i}(t)$ given by the expression \eqref{ME-BC3}. Now we are able to fix the value of the 
initial conditions for the first case. Notice that also the contants $c_j$ depend on the value $z_i$, i.e. 
$c_j=c_j^{z_i}$.\medskip

For the second functional, from $u(t,x)=\E(u_0(X_t^x))$ we obtain
\[
u(0,x)=\E(u_0(X_0^x))=\int_0^1 x(z) dz,
\]
and at other hand
\[
u(0,x)=\sum_{\bm{n}\in \mathcal{J}^{M,N}} u_{\bm{n}}(0) H_{\bm{n}}(x),
\]
then 
\[
\int_0^1 x(z) dz=\sum_{\bm{n}\in \mathcal{J}^{M,N}} u_{\bm{n}}(0) H_{\bm{n}}(x).
\]
Multiplying by $ H_{\bm{m}}(x) $ and integrating in the Hilbert space $L^2(\mathcal{H},\mu)$ and by using Fubini we have
\begin{align*}
 u_{\bm{m}}(0)=\int_{\mathcal{H}} \int_0^1 x(z) dz H_{\bm{m}}(x) \mu(dx)= \int_0^1\left( \int_{\mathcal{H}}x(z) H_{\bm{m}}(x) \mu(dx)\right)dz 
\end{align*}
We focus on the integral on $\mathcal{H}$. By following the steps given for the first functional (just replacing $z_i$ by $z$) we can arrive to the following expression
\[
  \int_{\mathcal{H}}x(z) H_{\bm{m}}(x) \mu(dx)
  \approx \prod_{k=1}^M \frac{e_k(z)}{\lambda_k} \int_{\IR}  P_{m_k} \big(\xi_k\big) \xi_k \mu(d\xi_k),
\]
thus 
\begin{align}
 u_{\bm{m}}(0) &\approx \int_0^1  \prod_{k=1}^M \frac{e_k(z)}{\lambda_k} 
 \left( \int_{\IR}  P_{m_k} \big(\xi_k\big) \xi_k \mu(d\xi_k)\right) dz\nonumber\\
&= \prod_{k=1}^M \int_{\IR}  P_{m_k} \big(\xi_k\big) \xi_k \mu(d\xi_k) \int_0^1   \frac{e_k(z)}{\lambda_k}  dz \label{ME-BC4}  
\end{align}

From here and by following the procedure for the first functional we are able to fix the initial conditions.

\section{Well posedness and convergence}\label{WPC-sect}

Let $\mathcal{J}$ be a countable set, $\left\{  \lambda_{m};m\in
\mathcal{J}\right\}  $ a sequence of positive real numbers diverging to
infinity and $\left\{  C_{nm};n,m\in\mathcal{J}\right\}  $ a sequence of real
numbers. Consider the infinite system of equations
\begin{align*}
u_{m}^{\prime}\left(  t\right)   & =-\lambda_{m}u_{m}\left(  t\right)
+\sum_{n\in\mathcal{J}}C_{nm}u_{n}\left(  t\right)  ,\qquad t\geq0\\
u_{m}\left(  0\right)   & =u_{m}^{0},\qquad m\in\mathcal{J}%
\end{align*}
with given initial condition $\left\{  u_{m}^{0};m\in\mathcal{J}\right\}  $.
We always assume%
\[
\sum_{m\in\mathcal{J}}\left(  u_{m}^{0}\right)  ^{2}<\infty.
\]

\begin{definition}
\label{Def sol}A solution is a sequence $\left\{  u_{m}\left(  \cdot\right)
;m\in\mathcal{J}\right\}  $ of continuous functions on $\left[  0,T\right]  $
such that:

i)%
\[
\sup_{t\in\left[  0,T\right]  }\sum_{m\in\mathcal{J}}u_{m}^{2}\left(
t\right)  +\int_{0}^{T}\sum_{m\in\mathcal{J}}\lambda_{m}u_{m}^{2}\left(
s\right)  ds<\infty
\]

ii)\ the series $\sum_{n\in\mathcal{J}}C_{nm}u_{n}\left(  t\right)  $
converges, for a.e. $t$, to an integrable functions on $\left[  0,T\right]  $ and

iii)\
\[
u_{m}\left(  t\right)  =u_{m}^{0}-\int_{0}^{t}\lambda_{m}u_{m}\left(
s\right)  ds+\int_{0}^{t}\sum_{n\in\mathcal{J}}C_{nm}u_{n}\left(  s\right)
ds
\]
for all $m\in\mathcal{J}$ and $t\in\left[  0,T\right]  $.
\end{definition}

Consider also, for any finite subset $\widetilde{\mathcal{J}}\subset
\mathcal{J}$, the finite system%
\begin{align*}
\widetilde{u}_{m}^{\prime}\left(  t\right)   & =-\lambda_{m}\widetilde{u}%
_{m}\left(  t\right)  +\sum_{n\in\widetilde{\mathcal{J}}}C_{nm}\widetilde{u}%
_{n}\left(  t\right)  ,\qquad t\geq0\\
\widetilde{u}_{m}\left(  0\right)   & =u_{m}^{0},\qquad m\in
\widetilde{\mathcal{J}}%
\end{align*}
The definition of solution for this finite system is obvious and existence and
uniqueness is well known.

\begin{theorem}
Assume that the family $\left\{  C_{nm};n,m\in\mathcal{J}\right\}  $
satisfies, for some constant $C>0$,%
\begin{equation}
\sum_{n,m\in\mathcal{J}}C_{nm}\alpha_{n}\beta_{m}\leq C\left(  \sum
_{n\in\mathcal{J}}\lambda_{n}\alpha_{n}^{2}\right)  ^{1/2}\left(  \sum
_{m\in\mathcal{J}}\beta_{m}^{2}\right)  ^{1/2}\qquad\text{for all sequences
}\left\{  \alpha_{n},\beta_{n};n\in\mathcal{J}\right\}  .\label{assumption}%
\end{equation}
Then there exists a unique solution. Moreover,%
\[
\sup_{t\in\left[  0,T\right]  }\sum_{m\in\widetilde{\mathcal{J}}}\left(
u_{m}\left(  t\right)  -\widetilde{u}_{m}\left(  t\right)  \right)  ^{2}%
+\int_{0}^{T}\sum_{m\in\widetilde{\mathcal{J}}}\lambda_{m}\left(  u_{m}\left(
s\right)  -\widetilde{u}_{m}\left(  s\right)  \right)  ^{2}ds\leq C_{1}%
\int_{0}^{T}\sum_{m\in\widetilde{\mathcal{J}}^{c}}\lambda_{m}u_{m}^{2}\left(
s\right)  ds
\]
for some $C_{1}>0$ independent of $\widetilde{\mathcal{J}}$; where the term
$\int_{0}^{T}\sum_{m\in\widetilde{\mathcal{J}}^{c}}\lambda_{m}u_{m}^{2}\left(
s\right)  ds$ converges to zero as $\widetilde{\mathcal{J}}$ converges to
$\mathcal{J}$.
\end{theorem}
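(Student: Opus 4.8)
The plan is to reformulate the system as an abstract linear parabolic equation $u'=-\Lambda u+Cu$ on $H=\ell^{2}(\mathcal{J})$ and to run the classical energy method: an a priori estimate, a Galerkin construction for existence, an energy identity for uniqueness, and finally a separate energy estimate for the truncation error. I would let $\Lambda$ be the diagonal operator $\Lambda e_{m}=\lambda_{m}e_{m}$ (with $\{e_{m}\}$ the canonical basis of $H$) and $V:=D(\Lambda^{1/2})$, i.e.\ the sequences with $\|v\|_{V}^{2}:=\sum_{m}\lambda_{m}v_{m}^{2}<\infty$; since $\lambda_{m}\to\infty$ we have $\lambda_{\ast}:=\inf_{m}\lambda_{m}>0$, so $V\hookrightarrow H\hookrightarrow V'$ continuously and densely. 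The first point to settle is the meaning of the coupling term: applying \eqref{assumption} to sign-changed, finitely supported sequences and then letting the support grow (monotone convergence), one sees that \eqref{assumption} holds with $C_{nm}$ replaced by $|C_{nm}|$; hence for every $v\in V$ the series $(Cv)_{m}:=\sum_{n}C_{nm}v_{n}$ converges absolutely, the bilinear form $b(v,w):=\sum_{n,m}C_{nm}v_{n}w_{m}$ is well defined on $V\times H$ with $|b(v,w)|\le C\|v\|_{V}\|w\|_{H}$, and $C$ extends to a bounded operator from $V$ into $H\hookrightarrow V'$ with $\langle Cv,w\rangle=b(v,w)$. No symmetry or sign condition on $(C_{nm})$ is needed.

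For the a priori estimate I would multiply the $m$-th equation by $u_{m}$, sum over $m$, and use \eqref{assumption} with $\alpha=\beta=u$ together with Young's inequality to obtain
\[
\tfrac12\tfrac{d}{dt}\|u\|_{H}^{2}+\tfrac12\|u\|_{V}^{2}\le\tfrac{C^{2}}{2}\|u\|_{H}^{2}.
\]
Gronwall then gives $\sup_{[0,T]}\|u(t)\|_{H}^{2}\le e^{C^{2}T}\|u^{0}\|_{H}^{2}$ and, integrating the second term, $\int_{0}^{T}\|u(s)\|_{V}^{2}\,ds\le C_{T}\|u^{0}\|_{H}^{2}$; this is precisely bound (i) of Definition~\ref{Def sol}, and the computation is valid verbatim for the finite truncated systems, with the same constants, since \eqref{assumption} restricted to sequences supported in a finite set is a special case of it.

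For existence I would take an increasing sequence of finite sets $\widetilde{\mathcal{J}}_{k}\uparrow\mathcal{J}$ and let $u^{(k)}$ be the (elementary) solution of the corresponding finite system, extended by zero outside $\widetilde{\mathcal{J}}_{k}$. By the previous step $\{u^{(k)}\}$ is bounded in $L^{\infty}(0,T;H)\cap L^{2}(0,T;V)$, so along a subsequence it converges weakly-$\ast$ in $L^{\infty}(0,T;H)$ and weakly in $L^{2}(0,T;V)$ to some $u$. Because the coupling is \emph{linear}, no strong (Aubin--Lions) compactness is needed: for fixed $m$ the functional $v\mapsto b(v,e_{m})$ belongs to $V'$, say $b(v,e_{m})=\langle g_{m},v\rangle_{V',V}$, and for $k$ large ($m\in\widetilde{\mathcal{J}}_{k}$) the integrated $m$-th equation reads $u^{(k)}_{m}(t)=u^{0}_{m}-\int_{0}^{t}\lambda_{m}u^{(k)}_{m}\,ds+\int_{0}^{t}\langle g_{m},u^{(k)}\rangle\,ds$; testing $u^{(k)}$ against $\psi(s)e_{m}\in L^{2}(0,T;H)$ and against $g_{m}\mathbf{1}_{[0,t]}\in L^{2}(0,T;V')$ and passing to the limit shows that $u$ satisfies Definition~\ref{Def sol}(iii). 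Property (i) follows from weak lower semicontinuity of the norms applied to the uniform bounds, and (ii) from $|b(u(s),e_{m})|\le C\|u(s)\|_{V}\in L^{2}(0,T)$ together with the absolute-convergence remark. For uniqueness, if $u$ solves the system with $u^{0}=0$, then $u'=-\Lambda u+Cu$ with $\|\Lambda u\|_{V'}=\|u\|_{V}$ and $\|Cu\|_{V'}\le C'\|u\|_{V}$ both in $L^{2}(0,T)$, so $u'\in L^{2}(0,T;V')$; by the Lions--Magenes lemma $u\in C([0,T];H)$ and $\tfrac{d}{dt}\|u\|_{H}^{2}=2\langle u',u\rangle_{V',V}=-2\|u\|_{V}^{2}+2b(u,u)\le C^{2}\|u\|_{H}^{2}$, whence $u\equiv0$ by Gronwall. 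This Lions--Magenes step, together with the identification of $Cu$ as a $V'$-element from the first paragraph, is the technical heart of the proof; the remaining difficulty is merely checking that the weak limit really solves the equation in the componentwise sense of Definition~\ref{Def sol}, which is exactly where linearity of the coupling is used.

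Finally, for the truncation estimate I would fix a finite $\widetilde{\mathcal{J}}\subset\mathcal{J}$, let $\widetilde{u}$ solve the finite system with data $u_{m}^{0}$ ($m\in\widetilde{\mathcal{J}}$), and set $e_{m}:=u_{m}-\widetilde{u}_{m}$ for $m\in\widetilde{\mathcal{J}}$. Subtracting the two systems yields, for $m\in\widetilde{\mathcal{J}}$,
\[
e_{m}'=-\lambda_{m}e_{m}+\sum_{n\in\widetilde{\mathcal{J}}}C_{nm}e_{n}+\sum_{n\in\widetilde{\mathcal{J}}^{c}}C_{nm}u_{n},\qquad e_{m}(0)=0,
\]
the initial data cancelling because they agree on $\widetilde{\mathcal{J}}$. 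Multiplying by $e_{m}$ and summing over $m\in\widetilde{\mathcal{J}}$: the interior term $\sum_{n,m\in\widetilde{\mathcal{J}}}C_{nm}e_{n}e_{m}$ is controlled by \eqref{assumption} restricted to $\widetilde{\mathcal{J}}$ and absorbed into $\tfrac14\sum_{m\in\widetilde{\mathcal{J}}}\lambda_{m}e_{m}^{2}$ plus a multiple of $\sum_{m\in\widetilde{\mathcal{J}}}e_{m}^{2}$; the source term equals $\sum_{n,m}C_{nm}\alpha_{n}\beta_{m}$ with $\alpha_{n}=u_{n}\mathbf{1}_{\widetilde{\mathcal{J}}^{c}}(n)$, $\beta_{m}=e_{m}\mathbf{1}_{\widetilde{\mathcal{J}}}(m)$, so \eqref{assumption} and Young bound it by $\tfrac12\sum_{n\in\widetilde{\mathcal{J}}^{c}}\lambda_{n}u_{n}^{2}$ plus a multiple of $\sum_{m\in\widetilde{\mathcal{J}}}e_{m}^{2}$. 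One thus arrives at $\tfrac{d}{dt}\sum_{m\in\widetilde{\mathcal{J}}}e_{m}^{2}+\tfrac12\sum_{m\in\widetilde{\mathcal{J}}}\lambda_{m}e_{m}^{2}\le C_{2}\sum_{m\in\widetilde{\mathcal{J}}}e_{m}^{2}+\sum_{n\in\widetilde{\mathcal{J}}^{c}}\lambda_{n}u_{n}^{2}$, and Gronwall (using $e(0)=0$) gives the stated inequality with a constant $C_{1}=C_{1}(C,T)$ independent of $\widetilde{\mathcal{J}}$. That $\int_{0}^{T}\sum_{m\in\widetilde{\mathcal{J}}^{c}}\lambda_{m}u_{m}^{2}\,ds\to0$ as $\widetilde{\mathcal{J}}\uparrow\mathcal{J}$ follows by dominated convergence, since $\int_{0}^{T}\sum_{m\in\mathcal{J}}\lambda_{m}u_{m}^{2}\,ds<\infty$ by bound (i).
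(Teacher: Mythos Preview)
Your proof is correct and follows essentially the same route as the paper: the same Gelfand triple $V\subset H\subset V'$ of weighted $\ell^{2}$ spaces, the same coercivity estimate coming from \eqref{assumption} and Young, and exactly the same energy computation for the truncation error $v_{m}=u_{m}-\widetilde u_{m}$ leading to Gronwall. The only notable difference is presentational: the paper packages Step~1 by verifying that the bilinear form $a(\alpha,\beta)=\sum_{n}\lambda_{n}\alpha_{n}\beta_{n}-\sum_{n,m}C_{nm}\alpha_{n}\beta_{m}$ is continuous and coercive on $V\times V$ and then invokes Lions' abstract well-posedness theorem as a black box, whereas you unroll that theorem by hand (Galerkin approximations, uniform energy bounds, weak compactness, Lions--Magenes for the energy identity). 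Both approaches are standard and equivalent here; yours is more self-contained, the paper's is shorter.

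One small remark: your sign-flipping argument does not quite give \eqref{assumption} with $C_{nm}$ replaced by $|C_{nm}|$ for the \emph{double} sum (that would require the sign pattern of $(C_{nm})$ to be rank one), but for each fixed $m$ it does show $\sum_{n}|C_{nm}|\,|v_{n}|\le C\|v\|_{V}$, which is all you actually use when you claim $(Cv)_{m}$ converges absolutely and $C:V\to H$ is bounded. So the argument is fine, just slightly overstated in that sentence.
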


\begin{remark}
Under assumption (\ref{assumption}), given $m_{0}\in\mathcal{J}$ and
$s\in\left[  0,T\right]  $, choose $\alpha_{n}=u_{n}\left(  s\right)  $ and
$\beta_{n}$ equal to zero except for $\beta_{m_{0}}=1$; then
\[
\left\vert \sum_{n\in\mathcal{J}}C_{nm_{0}}u_{n}\left(  s\right)  \right\vert
=\left\vert \sum_{n,m\in\mathcal{J}}C_{nm}u_{n}\left(  s\right)  \beta
_{m}\right\vert \leq C\left(  \sum_{n\in\mathcal{J}}\lambda_{n}u_{n}%
^{2}\left(  s\right)  \right)  ^{1/2}\leq C\left(  1+\sum_{n\in\mathcal{J}%
}\lambda_{n}u_{n}^{2}\left(  s\right)  \right)
\]
hence, in Definition \ref{Def sol}, condition (i) implies (ii).
\end{remark}

\begin{proof}
\textbf{Step 1} (existence and uniqueness). Let $H,V$ be the real separable
Hilbert spaces of sequences $\alpha=\left\{  \alpha_{n};n\in\mathcal{J}%
\right\}  $ such that, respectively $\left\Vert \alpha\right\Vert _{H}%
^{2}:=\sum_{n\in\mathcal{J}}\alpha_{n}^{2}<\infty$, $\left\Vert \alpha
\right\Vert _{V}^{2}:=\sum_{n\in\mathcal{J}}\lambda_{n}\alpha_{n}^{2}<\infty$,
with norms $\left\Vert \alpha\right\Vert _{H}^{2}$ and $\left\Vert
\alpha\right\Vert _{V}^{2}$ respectively; let $\left\langle \cdot
,\cdot\right\rangle _{H}$ denote the inner product in $H$. Since we have
assumed at the beginning that $\left\{  \lambda_{m};m\in\mathcal{J}\right\}  $
diverges to infinity, we have $V\subset H$ and there exists a constant
$C_{H,V}$ such that $\left\Vert \alpha\right\Vert _{H}^{2}\leq C_{H,V}%
\left\Vert \alpha\right\Vert _{V}^{2}$ for all $\alpha\in V$. Let $V^{\prime}$
be the dual space of $V$, with norm $\left\Vert \cdot\right\Vert _{V^{\prime}%
}^{2}$. We identify $H$ with its dual $H^{\prime}$ so that $V\subset H\subset
V^{\prime}$ and denote by $\left\langle \cdot,\cdot\right\rangle $ the dual
pairing between $V$ and $V^{\prime}$, which extends $\left\langle \cdot
,\cdot\right\rangle _{H}$.

Let $a\left(  \cdot,\cdot\right)  :V\times V\rightarrow\mathbb{R}$ be the
bilinear map defined as%
\[
a\left(  \alpha,\beta\right)  =\sum_{n\in\mathcal{J}}\lambda_{n}\alpha
_{n}\beta_{n}-\sum_{n,m\in\mathcal{J}}C_{nm}\alpha_{n}\beta_{m}.
\]
It holds%
\begin{align*}
\left\vert a\left(  \alpha,\beta\right)  \right\vert  & \leq\sum
_{n\in\mathcal{J}}\lambda_{n}\alpha_{n}^{2}+\sum_{n\in\mathcal{J}}\lambda
_{n}\beta_{n}^{2}+C\left(  \sum_{n\in\mathcal{J}}\lambda_{n}\alpha_{n}%
^{2}\right)  ^{1/2}\left(  \sum_{n\in\mathcal{J}}\beta_{n}^{2}\right)
^{1/2}\\
& =\left(  1+C\right)  \left\Vert \alpha\right\Vert _{V}^{2}+\left\Vert
\beta\right\Vert _{V}^{2}+C\left\Vert \beta\right\Vert _{H}^{2}%
\end{align*}
hence $a\left(  \cdot,\cdot\right)  $ is well defined and continuous on
$V\times V$. Moreover, since%
\[
C\left(  \sum_{n\in\mathcal{J}}\lambda_{n}\alpha_{n}^{2}\right)  ^{1/2}\left(
\sum_{n\in\mathcal{J}}\beta_{n}^{2}\right)  ^{1/2}\leq\frac{1}{2}\sum
_{n\in\mathcal{J}}\lambda_{n}\alpha_{n}^{2}+2C^{2}\sum_{n\in\mathcal{J}}%
\beta_{n}^{2}%
\]
we get%
\[
a\left(  \alpha,\alpha\right)  =\sum_{n\in\mathcal{J}}\lambda_{n}\alpha
_{n}^{2}-\sum_{n,m\in\mathcal{J}}C_{nm}\alpha_{n}\alpha_{m}\geq\frac{1}{2}%
\sum_{n\in\mathcal{J}}\lambda_{n}\alpha_{n}^{2}-2C^{2}\left\Vert
\alpha\right\Vert _{H}^{2}%
\]
hence $a\left(  \cdot,\cdot\right)  $ is coercive on $V\times V$. Consider the
equation
\[
\left\langle u\left(  t\right)  ,\phi\right\rangle _{H}+\int_{0}^{t}a\left(
u\left(  s\right)  ,\phi\right)  ds=\left\langle u^{0},\phi\right\rangle
_{H}+\int_{0}^{t}\left\langle f\left(  s\right)  ,\phi\right\rangle ds
\]
with $\phi\in V$, $u^{0}\in H$, $f\in L^{2}\left(  0,T;H\right)  $ (one can
treat also $f\in L^{2}\left(  0,T;V^{\prime}\right)  $ but this is not
important here). By solution we mean a function $u\in L^{\infty}\left(
0,T;H\right)  \cap L^{2}\left(  0,T;V\right)  $ which satisfies this equation
for all $\phi\in V$ and all $t\in\left[  0,T\right]  $. By a well known
theorem (see \cite{Lions}), there exists a unique solution of this equation,
with%
\[
\sup_{t\in\left[  0,T\right]  }\left\Vert u\left(  t\right)  \right\Vert
_{H}^{2}+\int_{0}^{T}\left\Vert u\left(  s\right)  \right\Vert _{V}%
^{2}ds<\infty.
\]
This proves existence and uniqueness of a solution of the infinite system
above, in the sense of Definition \ref{Def sol}.

\textbf{Step 2} (convergence) Let us prove the estimate between the finite and
infinite system. We have%
\[
u_{m}\left(  t\right)  =u_{m}^{0}-\int_{0}^{t}\lambda_{m}u_{m}\left(
s\right)  ds+\int_{0}^{t}\sum_{n\in\widetilde{\mathcal{J}}}C_{nm}u_{n}\left(
s\right)  ds+\int_{0}^{t}R_{m}^{\widetilde{\mathcal{J}}}\left(  s\right)  ds
\]
where $R_{m}^{\widetilde{\mathcal{J}}}\left(  s\right)  =\sum_{n\in
\widetilde{\mathcal{J}}^{c}}C_{nm}u_{n}\left(  s\right)  $;\ we know that
$R_{m}^{\widetilde{\mathcal{J}}}$ is an integrable function, by definition of
solution. Then, for the new variable $v_{m}\left(  t\right)  :=u_{m}\left(
t\right)  -\widetilde{u}_{m}\left(  t\right)  $ we have
\[
v_{m}\left(  t\right)  =-\int_{0}^{t}\lambda_{m}v_{m}\left(  s\right)
ds+\int_{0}^{t}\sum_{n\in\widetilde{\mathcal{J}}}C_{nm}v_{n}\left(  s\right)
ds+\int_{0}^{t}R_{m}^{\widetilde{\mathcal{J}}}\left(  s\right)  ds.
\]
It follows that the family $\left\{  v_{m};m\in\widetilde{\mathcal{J}%
}\right\}  $ satisfies the finite system
\begin{align*}
v_{m}^{\prime}\left(  t\right)    & =-\lambda_{m}v_{m}\left(  t\right)
+\sum_{n\in\widetilde{\mathcal{J}}}C_{nm}v_{n}\left(  t\right)  +R_{m}%
^{\widetilde{\mathcal{J}}}\left(  t\right)  ,\qquad t\geq0\\
v_{m}\left(  0\right)    & =0,\qquad m\in\widetilde{\mathcal{J}}.
\end{align*}
We have%
\begin{align*}
\sum_{m\in\widetilde{\mathcal{J}}}v_{m}R_{m}^{\widetilde{\mathcal{J}}}  &
=\sum_{m\in\widetilde{\mathcal{J}}}\sum_{n\in\widetilde{\mathcal{J}}^{c}%
}C_{nm}u_{n}v_{m}\leq C\left(  \sum_{n\in\widetilde{\mathcal{J}}^{c}}%
\lambda_{n}u_{n}^{2}\right)  ^{1/2}\left(  \sum_{n\in\widetilde{\mathcal{J}}%
}v_{n}^{2}\right)  ^{1/2}\\
& \leq C^{2}\sum_{n\in\widetilde{\mathcal{J}}}v_{n}^{2}+\sum_{n\in
\widetilde{\mathcal{J}}^{c}}\lambda_{n}u_{n}^{2}%
\end{align*}
and thus%
\begin{align*}
\frac{1}{2}\frac{d}{dt}\sum_{m\in\widetilde{\mathcal{J}}}v_{m}^{2}+\sum
_{m\in\widetilde{\mathcal{J}}}\lambda_{m}v_{m}^{2}  & =\sum_{n,m\in
\widetilde{\mathcal{J}}}C_{nm}v_{n}v_{m}+\sum_{m\in\widetilde{\mathcal{J}}%
}v_{m}R_{m}^{\widetilde{\mathcal{J}}}\\
& \leq C\left(  \sum_{n\in\mathcal{J}}\lambda_{n}v_{n}^{2}\right)
^{1/2}\left(  \sum_{n\in\mathcal{J}}v_{n}^{2}\right)  ^{1/2}+C^{2}\sum
_{n\in\widetilde{\mathcal{J}}}v_{n}^{2}+\sum_{n\in\widetilde{\mathcal{J}}^{c}%
}\lambda_{n}u_{n}^{2}\\
& \leq\frac{1}{2}\sum_{m\in\widetilde{\mathcal{J}}}\lambda_{m}v_{m}^{2}%
+3C^{2}\sum_{m\in\widetilde{\mathcal{J}}}v_{m}^{2}+\sum_{n\in
\widetilde{\mathcal{J}}^{c}}\lambda_{n}u_{n}^{2}%
\end{align*}
hence (renaming the constant $C$)%
\[
\frac{1}{2}\frac{d}{dt}\sum_{m\in\widetilde{\mathcal{J}}}v_{m}^{2}+\frac{1}%
{2}\sum_{m\in\widetilde{\mathcal{J}}}\lambda_{m}v_{m}^{2}\leq3C^{2}\sum
_{m\in\widetilde{\mathcal{J}}}v_{m}^{2}+\sum_{n\in\widetilde{\mathcal{J}}^{c}%
}\lambda_{n}u_{n}^{2}%
\]
which, by Gronwall lemma, easily implies that there exists a constant
$C_{1}>0$, independent of the finite subset $\widetilde{\mathcal{J}}$, such
that%
\[
\sup_{t\in\left[  0,T\right]  }\sum_{m\in\widetilde{\mathcal{J}}}v_{m}%
^{2}\left(  t\right)  +\int_{0}^{T}\sum_{m\in\widetilde{\mathcal{J}}}%
\lambda_{m}v_{m}^{2}\left(  s\right)  ds\leq C_{1}\int_{0}^{T}\sum
_{n\in\widetilde{\mathcal{J}}^{c}}\lambda_{n}u_{n}^{2}\left(  s\right)  ds.
\]

The proof is complete.
\end{proof}

\begin{proposition}
Let $B:\mathcal{H}\rightarrow\mathcal{H}$ be bounded measurable and let
$C_{nm}$ be given by
\[
C_{nm}=\int_{\mathcal{H}}\left\langle B\left(  x\right)  ,D_{x}H_{n}\left(
x\right)  \right\rangle _{\mathcal{H}}H_{m}\left(  x\right)  \mu\left(
dx\right)  .
\]
If%
\begin{align}
 \int_{\mathcal{H}}\left\vert D_{x}\varphi\left(  x\right)  \right\vert
_{\mathcal{H}}^{2}\mu\left(  dx\right)  \leq 2 \sum_{n\in\mathcal{J}}\lambda
_{n}\varphi_{n}^{2}\label{s6.2.1}
\end{align}
for every function $\varphi\left(  x\right)  $ of the form $\varphi\left(
x\right)  =\sum_{n\in\mathcal{J}}\varphi_{n}H_{n}\left(  x\right)  $, then
condition (\ref{assumption}) holds true.
\end{proposition}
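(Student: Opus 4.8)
The plan is to read the left-hand side of (\ref{assumption}) as a pairing in $L^2(\mathcal{H},\mu)$ and to estimate it by two successive Cauchy--Schwarz inequalities: one in $\mathcal{H}$, pointwise in $x$, which brings in $|D_x\varphi|_{\mathcal{H}}$ and the bound on $B$, and one in $L^2(\mathcal{H},\mu)$, which splits the gradient factor from the remaining factor. Hypothesis (\ref{s6.2.1}) then controls the gradient factor, while Parseval's identity for the orthonormal system $\{H_m\}$ (Lemma \ref{s1.le1}) controls the other one, and assembling these yields (\ref{assumption}) with the explicit constant $C=\sqrt2\,\|B\|_\infty$, where $\|B\|_\infty:=\sup_{x\in\mathcal{H}}|B(x)|_{\mathcal{H}}<\infty$.

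I would first carry out the computation for finitely supported sequences $\{\alpha_n\}$, $\{\beta_m\}$, which suffices. Set $\varphi(x)=\sum_{n}\alpha_nH_n(x)$ and $\psi(x)=\sum_{m}\beta_mH_m(x)$ (finite sums), so that $D_x\varphi(x)=\sum_n\alpha_nD_xH_n(x)$. By the definition of $C_{nm}$ and linearity of $D_x$ and of the integral,
\[
\sum_{n,m}C_{nm}\alpha_n\beta_m=\int_{\mathcal{H}}\big\langle B(x),D_x\varphi(x)\big\rangle_{\mathcal{H}}\,\psi(x)\,\mu(dx).
\]
Applying Cauchy--Schwarz in $\mathcal{H}$ pointwise, then $|B(x)|_{\mathcal{H}}\le\|B\|_\infty$, then Cauchy--Schwarz in $L^2(\mathcal{H},\mu)$, and finally (\ref{s6.2.1}) together with $\int_{\mathcal{H}}\psi^2\,\mu(dx)=\sum_m\beta_m^2$ (orthonormality of $\{H_m\}$), we obtain
\[
\Big|\sum_{n,m}C_{nm}\alpha_n\beta_m\Big|\le\|B\|_\infty\Big(\int_{\mathcal{H}}|D_x\varphi|_{\mathcal{H}}^2\,\mu(dx)\Big)^{1/2}\Big(\int_{\mathcal{H}}\psi^2\,\mu(dx)\Big)^{1/2}\le\sqrt2\,\|B\|_\infty\Big(\sum_n\lambda_n\alpha_n^2\Big)^{1/2}\Big(\sum_m\beta_m^2\Big)^{1/2},
\]
which is precisely (\ref{assumption}). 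To pass to arbitrary sequences one only needs to treat the case where the right-hand side of (\ref{assumption}) is finite: apply the inequality just proved to the truncations of $\{\alpha_n\}$ and $\{\beta_m\}$ on finite index sets increasing to $\mathcal{J}$ and let them grow, the bound being uniform in the truncation.

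The single delicate point is the identification displayed above, and the limiting step behind it: one must know that $\sum_n\alpha_nD_xH_n$ converges in $L^2(\mathcal{H},\mu;\mathcal{H})$ and that the double series in (\ref{assumption}) is absolutely summable. The finite-support reduction removes the convergence worry from the core estimate, and hypothesis (\ref{s6.2.1}) is exactly the statement that $\varphi\mapsto D_x\varphi$ is bounded from the ``energy'' space $\{\varphi:\sum_n\lambda_n\varphi_n^2<\infty\}$ into $L^2(\mathcal{H},\mu;\mathcal{H})$, which is what legitimizes the passage to the limit; beyond this, the argument is just two Cauchy--Schwarz inequalities and Parseval.
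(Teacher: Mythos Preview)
Your proof is correct and follows essentially the same route as the paper: introduce $\varphi=\sum_n\alpha_nH_n$ and $\psi=\sum_m\beta_mH_m$, rewrite $\sum_{n,m}C_{nm}\alpha_n\beta_m$ as $\int_{\mathcal{H}}\langle B,D_x\varphi\rangle_{\mathcal{H}}\,\psi\,d\mu$, and apply Cauchy--Schwarz in $\mathcal{H}$ and then in $L^2(\mathcal{H},\mu)$ together with (\ref{s6.2.1}) and Parseval, arriving at the same constant $\sqrt{2}\,\|B\|_\infty$. Your additional reduction to finitely supported sequences before passing to the limit is a sensible piece of extra care that the paper omits.
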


\begin{proof}
Given two sequences $\left\{  \alpha_{n},\beta_{n};n\in\mathcal{J}\right\}  $,
setting
\[
\varphi\left(  x\right)  =\sum_{n\in\mathcal{J}}\alpha_{n}H_{n}\left(
x\right)  ,\qquad\psi\left(  x\right)  =\sum_{m\in\mathcal{J}}\beta_{m}%
H_{m}\left(  x\right)
\]
one simply has%
\begin{align*}
\sum_{n,m\in\mathcal{J}}C_{nm}\alpha_{n}\beta_{m}  & =\int_{\mathcal{H}%
}\left\langle B\left(  x\right)  ,D_{x}\varphi\left(  x\right)  \right\rangle
_{\mathcal{H}}\psi\left(  x\right)  \mu\left(  dx\right)  \\
& \leq\left\Vert B\right\Vert _{\infty}\int_{\mathcal{H}}\left\vert
D_{x}\varphi\left(  x\right)  \right\vert _{\mathcal{H}}\left\vert \psi\left(
x\right)  \right\vert \mu\left(  dx\right)  \\
& \leq\left\Vert B\right\Vert _{\infty}\left(  \int_{\mathcal{H}}\left\vert
D_{x}\varphi\left(  x\right)  \right\vert _{\mathcal{H}}^{2}\mu\left(
dx\right)  \right)  ^{1/2}\left(  \int_{\mathcal{H}}\left\vert \psi\left(
x\right)  \right\vert ^{2}\mu\left(  dx\right)  \right)  ^{1/2}\\
& \leq\left\Vert B\right\Vert _{\infty}\left(2  \sum_{n\in\mathcal{J}}%
\lambda_{n}\alpha_{n}^{2}\right)  ^{1/2}\left(  \sum_{m\in\mathcal{J}}%
\beta_{m}^{2}\right)  ^{1/2}.
\end{align*}

\end{proof}

 Now, we will prove that \eqref{s6.2.1} is satisfied in our case. Assume the conditions in Lemma \ref{s1.le2} holds. Then,
 for any $\Phi,\Psi\in \mathcal{S}(\mathbb{H})$\footnote{Recall that  $\mathcal{S}(\mathbb{H})$ is the set of all cylinder functionals on 
 $\mathcal{H}$}, the following Green's formula holds (for a proof see Lemma 4.4 in \cite{liu} for instance)
 \begin{align*}
-\frac{1}{2} \int_{\mathcal{H}} \langle QD_x\Phi,D_x\Psi \rangle_{\mathcal{H}} \mu(dx)=
\int_{\mathcal{H}} (\mathcal{A}_0\Phi)\Psi  \mu(dx)= \int_{\mathcal{H}} \Phi(\mathcal{A}_0\Psi)  \mu(dx).
 \end{align*}
 By taking $\Psi=\Phi=\varphi$ and $Q=Id$ we have
 \begin{align*}
  \int_{\mathcal{H}}  |D_x\varphi|_{\mathcal{H}}^2  \mu(dx)=
 \int_{\mathcal{H}} \langle D_x\varphi,D_x\varphi \rangle_{\mathcal{H}} \mu(dx)=
 -2\int_{\mathcal{H}} (\mathcal{A}_0\varphi)\varphi  \mu(dx)
 \end{align*}
 If $\varphi\left(x\right)  =\sum_{n\in\mathcal{J}}\varphi_{n}H_{n}\left(  x\right)  $, then 
 \begin{align*}
 -\int_{\mathcal{H}} (\mathcal{A}_0\varphi)\varphi  \mu(dx)&= 
 \int_{\mathcal{H}} \Bigg(-\mathcal{A}_0 \sum_{n\in\mathcal{J}}\varphi_{n}H_{n}\left(  x\right) \Bigg)
 \sum_{m\in\mathcal{J}}\varphi_{m}H_{m}\left(  x\right)  \mu(dx)\\
 &= \sum_{m\in\mathcal{J}} \int_{\mathcal{H}} \Bigg(\sum_{n\in\mathcal{J}}\varphi_{n}\big[-\mathcal{A}_0 H_{n}\left(  x\right)\big] \Bigg)
 \varphi_{m}H_{m}\left(  x\right)  \mu(dx)\\
 & = \sum_{m\in\mathcal{J}} \int_{\mathcal{H}} \sum_{n\in\mathcal{J}}\varphi_{n}\lambda_n H_{n}\left(  x\right) 
 \varphi_{m}H_{m}\left(  x\right)  \mu(dx)\\
  & = \sum_{m\in\mathcal{J}} \sum_{n\in\mathcal{J}}\varphi_{n} \varphi_{m} \lambda_n  \int_{\mathcal{H}} H_{n}\left(  x\right) 
 H_{m}\left(  x\right)  \mu(dx)\\
 &= \sum_{n\in\mathcal{J}}\lambda_n \varphi_{n}^2.
 \end{align*}
Where in the last step we will use that $ H_{n}\left(  x\right)$ is an orthonormal basis for $\mathcal{H} $. Then, we have
\begin{align*}
  \int_{\mathcal{H}}  |D_x\varphi|_{\mathcal{H}}^2  \mu(dx)=
  -2\int_{\mathcal{H}} (\mathcal{A}_0\varphi)\varphi  \mu(dx)
  = 2\sum_{n\in\mathcal{J}}\lambda_n\varphi_{n}^2 .
 \end{align*}

\section{Numerical Results}\label{AM-sect}

 \subsection{Algorithm description}
 
In this subsection we describe the algorithm we follow to get the simulations for the Kolmogorov equations associated with 
three stochastic partial differential equations whose results we show in next subsections.

\begin{tcolorbox}
 
\begin{enumerate}
 \item Choose the algorithm's parameters:
 \begin{itemize}
  \item[a)] The space $\mathcal{H}$ where the SPDE will be defined.
  \item[b)] The operator $A$ and its eigenfunctions $\lambda_k $ and eigenvalues $e_k(\cdot)$.
  \item[c)] The functional $u_0:\mathcal{H}\rightarrow \IR$.
  \item[d)] $N,M$ and then fix the set $J^{N,M}$.
  \item[e)] The time step $\Delta t$ and $\Delta x$ in the physical space.
 \end{itemize}
\item Compute the quantities $\bar{C}_{\bm{n},\bm{m}}$, for each $\bm{n},\bm{m}\in J^{N,M} $, to approximate \eqref{C-NM}.
 \item Set the finite system of coupled ordinary differential equation \eqref{fin-sys}
  \item Rewriting the system  \eqref{fin-sys} as a matrix differential equations and by solving it
  numerically we obtain, up to a set of constants, the time-functions $u_{\bm{n}}(t)$, for each $\bm{n}\in J^{N,M}$.
\item By using the functional $u_0$ the constants in the last step are fixed.
\item We then define the space-time approximation for the Kolmogorov equation as 
\begin{align*}
u_N(t,x)=\sum_{j=1}^N u_j(t)H_j(x)\approx \sum_{j\ge 1} u_j(t)H_j(x) = u(t,x)
\end{align*}
\end{enumerate}

\end{tcolorbox}

\begin{remark}

 \begin{itemize}
  \item  Given the operator $A$, we choose its eigenvalues as the basis for the Hilbert space $\mathcal{H}$ and we have 
  to find its eigenvalues $\lambda_k $.
\item The choice of the functional $u_0$ will change the way we determine the initial condition of the Kolmogorov equation, then 
it will necessary to adapt the method for each $u_0$.
\item the quantities $\bar{C}_{\bm{n},\bm{m}}$ are those that require more computing resources because we have to compute 
and approximate several integrals for each $\bm{n},\bm{m}\in J^{N,M}$. In our examples these quantities are given by
the expressions \eqref{C1heat}, \eqref{C1fisher} and \eqref{C1burger}.
 \end{itemize}
 
\end{remark}

\subsection{Stochastic Heat equation in an interval}
As a first application consider the stochastic diffusion in dimension 1.

Let $\mathcal{H}=L^2([0,1])$, $Q=Id$, and $A$ be given by $Ax=\nu\bigtriangleup_\xi x$, $x\in D(A)$ with 
$D(A)=H^2(0,1)\cap H_0^1(0,1)$ (where $H^2(0,1)$ is the Sobolev spaces and $H_0^1(0,1)$ is the subspace of $H^1(0,1)$ 
of all functions vanishing at $0,1$). 

Consider the heat equation in $[0,1]$

\begin{align}
 \frac{\partial X(t,\xi)}{\partial t}&= \nu\frac{\partial^2 X(t,\xi)}{\partial \xi^2}+f(\xi)+ 
 \frac{\partial^2 W}{\partial t\partial \xi},\quad \xi\in[0,1] \label{dif1}\\
 X(t,\xi)\mid_{t=0}&=X_0(\xi),\quad X_0\in \mathcal{H},\nonumber\\
 X(t,\xi)&=0,\quad t\ge 0, \xi=0,1,\nonumber
\end{align}
where  $t\in [0,T]$, $f(\xi)=\xi^3$, $X_0(\xi)=\sin(\pi x)$. $W$ is a cylindrical Wiener process 
on $\mathcal{H}$, associated to a stochastic basis $(\Omega,\mathcal{F},\Pb,(\mathcal{F}_t)_{t\ge 0})$. $\nu$ denotes the
thermal diffusivity.

The complete orthonormal system of eigenfunctions $e_k$ is defined as
\begin{align*}
 e_k(\xi)=\sqrt{2}\sin(k\pi\xi),\qquad \xi\in [0,1],\quad k\in\IN.
\end{align*}

A is self-adjoint negative operator and $Ae_k=-\nu k^2\pi^2 e_k$, $k\in\IN$.

We rewrite the equation \eqref{dif1} as an abstract differential equation on $\mathcal{H}$. Set $B=f$, then
\begin{align*}
 dX&=[AX+B(X)]dt + dW_t,\\
 X(0)&=x,\quad x\in \mathcal{H}
\end{align*}

Define $ u(t,x)=\E\big[ u_0(X_t^x)\big]$ and then $u(t,x)$ satisfies the Kolmogorov equation
\begin{equation*}
\frac{\partial u}{\partial t}= \frac{1}{2}Tr(QD^2u)+ \langle Ax, Du \rangle_\mathcal{H} + \langle B(x),Du \rangle_\mathcal{H},\qquad x\in D(A).
\end{equation*}

\vspace*{-0.1cm} We will consider two cases :  
 \begin{align*}
  u_0^{\xi_0}(g)&:= g(\xi_0).\qquad \mbox{for fixed } \xi_0\in(0,1)\\
 &\mbox{\hspace*{-3cm} and }\\
 \vspace*{-0.1cm}  u_0(g)&:=\int_0^1 g(\xi) d\xi.
 \end{align*}

As before we write the solution as 
\begin{equation}
u(t,x) =  \sum_{\bm{n}\in \mathcal{J}} u_{\bm{n}}(t) H_{\bm{n}}(x), \qquad x\in\mathcal{H},\quad t\in[0,T],
\end{equation}
where $u_{\bm{n}}:[0,T]\mapsto \IR$ and $H_{\bm{n}}(x)$ are the Hermite functionals.
Following the last procedure  we set the infinite system of coupled ordinary differential equations.
\begin{equation}
   \dot{u}_{\bm{m}}(t) =   -u_{\bm{m}}(t) \lambda_{\bm{m}} 
  + \sum_{\bm{n}\in \mathcal{J}} u_{\bm{n}}(t) C_{\bm{n},\bm{m}},\qquad \bm{n},\bm{m}\in\mathcal{J}\label{inf-sys-dif}
\end{equation}
where $ C_{\bm{n},\bm{m}}$ is given by
\begin{equation}
C_{\bm{n},\bm{m}}:=\int_\mathcal{H} \big\langle B(x), D_x H_{\bm{n}}(x) \big\rangle_\mathcal{H} H_{\bm{m}}(x) \mu(dx).\label{C-NM1}
 \end{equation}

The numerical method for this case is applied now. We have that $\Lambda = \tfrac{1}{2}(-A)^{-1}$ have eigenvalues
$1/(2\nu\pi^2|k|^2) $, then the operator 
$\Lambda^{-1}$ is well-defined and have eigenvalues $2\nu\pi^2|k|^2 $, and $\Lambda^{-\tfrac{1}{2}}$ can
also be befined having eigenvalues $ \sqrt{2\nu}\pi|k|$, then 
\[
 \Big\langle B(x), \Lambda^{-\tfrac{1}{2}} e_k\Big\rangle_{L^2([0,1])} = \sqrt{2\nu}\pi|k|\big\langle f,
 e_k\big\rangle_{L^2([0,1])}.
\]

Notice that   $H_{\bm{n}}=\prod_{\alpha} P_{n_\alpha}(\xi_\alpha)$,  $H_{\bm{m}}=\prod_{\alpha} P_{m_\alpha}(\xi_\alpha)$ and $
P_{m_k}^{'} \big(\xi_k\big) =m_k^{1/2} P_{m_k-1}\big(\xi_k\big)$. Then, we rewrite 
$ C_{\bm{n},\bm{m}} $ as follows.

\begin{align*}
 C_{\bm{n},\bm{m}} &=\sum_{k=1}^\infty\int_\mathcal{H}  \Big\langle B(x),  \Lambda^{-\tfrac{1}{2}} e_k\Big\rangle_\mathcal{H} 
 \prod_{\stackrel{i=1}{i\ne k}}^\infty P_{n_i}\big(\langle x,  \Lambda^{-\tfrac{1}{2}} e_i\rangle_\mathcal{H}\big) P_{n_k}^{'} 
 \big(\langle x,\Lambda^{-\tfrac{1}{2}} e_k\rangle_\mathcal{H}\big) H_{\bm{m}}(x) \mu(dx)\\
 &=\sum_{k=1}^\infty\int_\mathcal{H}  \lambda_k \big\langle f,  e_k\big\rangle_\mathcal{H}
 P_{m_k}\big(\langle x,  \Lambda^{-\tfrac{1}{2}} e_k\rangle_\mathcal{H}\big)
  P_{n_k}^{'}  \big(\langle x,\Lambda^{-\tfrac{1}{2}} e_k\rangle_\mathcal{H}\big)\\
  &\qquad\qquad\times\prod_{\stackrel{i=1}{i\ne k}}^\infty P_{n_i}\big(\langle x,  
  \Lambda^{-\tfrac{1}{2}} e_i\rangle_\mathcal{H}\big) P_{m_i}\big(\langle x,  \Lambda^{-\tfrac{1}{2}} e_i\rangle_\mathcal{H}\big)
  \mu(dx).
 \end{align*}
 Writing the measure $\mu(dx)$ in the direction $e_k$  as 
 $\mu(dx)e_k=\tfrac{1}{\lambda_k}\mu\Big(d\big(\langle x,  \Lambda^{-\tfrac{1}{2}} e_k\rangle_\mathcal{H}\big)\Big)=
 \tfrac{1}{\lambda_k}\mu(d\xi_k)$ with $\xi_k=\langle x,  \Lambda^{-\tfrac{1}{2}} e_k\rangle_\mathcal{H}$, then we approximate
 $ C_{\bm{n},\bm{m}}$ as

\begin{align*}
 C_{\bm{n},\bm{m}} &=\sum_{k=1}^{\infty} \lambda_k \int_0^{1} f(\xi) e_k(\xi) d\xi 
\int_{\IR} n_k^{1/2} P_{m_k}\big(\xi_k\big) P_{n_k-1}\big(\xi_k\big)\frac{1}{\lambda_k}\mu(d\xi_k)\\ 
  &\qquad\quad\times \int_{\IR^{\IN}} \prod_{\stackrel{i=1}{i\ne k}}^{\infty} P_{n_i}(\xi_i)P_{m_i}(\xi_i) \frac{1}{\lambda_i} \mu(d\xi_i)   \\
&\approx \sum_{k=1}^{M}  \int_0^{1} f(\xi) e_k(\xi) d\xi 
\int_{\IR} n_k^{1/2} P_{m_k}\big(\xi_k\big) P_{n_k-1}\big(\xi_k\big)\mu(d\xi_k)\\ 
  &\qquad\quad\times \int_{\IR^{M-1}} \prod_{\stackrel{i=1}{i\ne k}}^{M} P_{n_i}(\xi_i)P_{m_i}(\xi_i)\frac{1}{\lambda_i} \mu(d\xi_i)   \\
&=  \sum_{k=1}^{M}  n_k^{1/2} \int_0^{1} f(\xi) e_k(\xi)  d\xi 
\int_{\IR}  P_{m_k}\big(\xi_k\big) P_{n_k-1}\big(\xi_k\big)\mu(d\xi_k)\\ 
  &\qquad\quad\times \prod_{\stackrel{i=1}{i\ne k}}^{M}  \frac{1}{\lambda_i}\int_{\IR}  P_{n_i}(\xi_i)P_{m_i}(\xi_i) \mu(d\xi_i)    
 \end{align*}

For $N_1\in\IN$ define  the set $S_{N_1}=\{\bm{n}_1,\bm{n}_2,\ldots,\bm{n}_{N_1}: \bm{n}_i\in J^{M,N}, i=1,\ldots,N_1 \}$. Moreover, 
for $\bm{n},\bm{m}\in S_{M}$ define
\begin{align}
\bar C_{\bm{n},\bm{m}}&:= \sum_{k=1}^{M} \sqrt{2\nu}\pi|k| n_k^{1/2} \int_0^{1} f(\xi) e_k(\xi) d\xi 
\int_{\IR}  P_{m_k}\big(\xi_k\big) P_{n_k-1}\big(\xi_k\big)\mu(d\xi_k)\nonumber\\ 
  &\qquad\quad\times \prod_{\stackrel{i=1}{i\ne k}}^{M} \int_{\IR}  P_{n_i}(\xi_i)P_{m_i}(\xi_i) \mu(d\xi_i), \label{C1heat}   
\end{align}
and the finite system of ordinary differential equations: 
\begin{equation}
   \dot{u}_{\bm{m}}(t) =   -u_{\bm{m}}(t) \lambda_{\bm{m}} 
  + \sum_{\bm{n}\in S_{M}} u_{\bm{n}}(t) \bar C_{\bm{n},\bm{m}},\qquad \mbox{ for each }\bm{m}\in S_{M} \mbox{ and } \bm{n} \in S_{M}.
  \label{fin-sys-dif3}
\end{equation}
Then \eqref{fin-sys-dif3} approximates to the infinite system of ordinary differential equations \eqref{inf-sys-dif} when 
$N,M \rightarrow \infty $. We use the system  \eqref{fin-sys-dif3} to approximate the solution of the FPK equation 
associated with the Diffusion equation.
 
We need to evaluate the integrals and do the finite sum on $k$, to do this we use a Gauss-Hermite quadrature to approximate the value 
of the integrals 
\[
 \int_0^{1} f(\xi) e_k(\xi) d\xi , \qquad  \int_{\IR} P_{m_k}\big(\xi_k\big) P_{n_k-1}\big(\xi_k\big)\mu(d\xi_k)   , \qquad 
 \int_{\IR}  P_{n_i}(\xi_i)P_{m_i}(\xi_i) \mu(d\xi_i).
 \]
 
%  To generate the values of the $N$-tuple $\bm{n},\bm{m}$ we first produce randomly a vector of $N$ random variables Bernoulli and then 
% we multiply each entry by a uniform($N$). {\color{red} This part is a crucial part of the simulation: recall the definition $J^{M,N}$,  according to previous simulations if $N>80$ and if in each $\bm{m}_i$ the number of index's, different from zero, is grater than $6$ then the constant $C_{\bm{n},\bm{m}} $ could be $\pm \infty$ and then we cannot have a solution for the system \eqref{MDE}}. Then, we choose $N=60$, and $M=20$ which give us that all the constant's $C_{\bm{n},\bm{m}} $  are finite and then we are able to obtain solutions to the system \eqref{MDE}.

When the constans $C_{\bm{n},\bm{m}}$  are fixed we solve the Matrix Differential equation \eqref{MDE}:
\begin{align}
 \dot{U}^M(t)=A U^M(t).\label{MDE1}
\end{align}
with
\[ 
A=\left( \begin{array}{ccccc}
-\lambda_{1}+ C_{1,1} & C_{2,1} & \cdots & C_{M-1,1} & C_{M,1} \\
 C_{1,2} & -\lambda_{2}+C_{2,2} & \cdots & C_{M-1,2} & C_{M,2} \\
\vdots &\vdots&\ddots&\vdots&\vdots \\
C_{1,M-1} & C_{2,M-1} & \cdots & -\lambda_{M-1}+ C_{M-1,M-1} & C_{M,M-1} \\
C_{1,M} & C_{2,M} & \cdots & C_{M-1,M} & -\lambda_{M}+ C_{M,M} \\
\end{array} 
\right),
\] 
 $\lambda_{i}=\lambda_{\bm{m}_i}$ and $C_{i,j}=C_{\bm{n}_i,\bm{m}_j}$ for $1\le i,j\le M$, and 
\begin{align*}
 U^M(t)&=\big( u_{\bm{m}_1}(t),u_{\bm{m}_2}(t),\ldots,u_{\bm{m}_M}(t) \big)^T\\
 \dot{U}^M(t)&=\big(\dot{u}_{\bm{m}_1}(t),\dot{u}_{\bm{m}_2}(t),\ldots,\dot{u}_{\bm{m}_M}(t) \big)^T
 \end{align*}
 
From this we get the general solution of \eqref{MDE1} is given by 
 \begin{equation}
 \left( \begin{array}{c}
 u_1(t)\\
u_2(t) \\
\vdots \\
u_{M-1}(t) \\
u_M(t)\\
\end{array} 
\right)=\left(\mathbf{V}_1 \quad \mathbf{V}_2 \quad\cdots \quad\mathbf{V}_{M-1} \quad \mathbf{V}_M \right)
 \left( \begin{array}{c}
 c_1e^{\lambda_1 t }\\
 c_2e^{\lambda_2 t }\\
\vdots \\
 c_{M-1}e^{\lambda_{M-1} t }\\
 c_Me^{\lambda_M t }\\
\end{array} 
\right)
\label{ME-BC1}
\end{equation}
 where $\mathbf{V}_i$ and $\lambda_i$ are the eigenvector and eigenvalue of the matrix $A$. It remains to fix the 
 set of constants $\{c_i,  1\le i\le M  \}$ which are  determined by using the initial conditions given in subsection \ref{ICsect}.
 
 \subsubsection*{Initial Conditions} 

We define the set points in the set $[0,1]$ as $\{\xi_i\}$, $i=0,\ldots,P$, such that $\xi_0=0$ and $\xi_{P}=1$. Then by using 
\eqref{ME-BC3} we  fix the values of the constants $c_i$

\begin{equation}
\left( \begin{array}{c}
 c_1\\
 c_2\\
\vdots \\
 c_M\\
\end{array} 
\right)
 =\left[\left( \begin{array}{cccc}
H_{1}(x_0)& H_{2}(x_0) & \cdots  & H_{m}(x_0) \\
H_{1}(x_1)& H_{2}(x_1) & \cdots  & H_{m}(x_1) \\
\vdots &\vdots&\ddots&\vdots \\
H_{1}(x_{P-1})& H_{2}(x_{P-1}) & \cdots  & H_{m}(x_{P-1}) \\
H_{1}(x_{P})& H_{2}(x_{P}) & \cdots & H_{m}(x_{P}) \\
\end{array} 
\right)\left(
\begin{array}{c}\mathbf{V}_1 \\
 \mathbf{V}_2 \\
 \vdots \\
 \mathbf{V}_M \\
 \end{array}\right)^T\right]^{-1}
 \left( \begin{array}{c}
 X_0(\xi_0)\\
X_0(\xi_1) \\
\vdots \\
X_0(\xi_{P})\\
\end{array} 
\right),
\label{ME-BC5}
\end{equation}
 
 With this, we have now completed the process to build the approximation for the solution.

\subsubsection{Deterministic equation associated with the stochastic diffusion \eqref{dif1}}\label{det1}
 
 Set 
 \begin{align*}
  y(t,\xi)=\E\big[X_t(\xi)\big]
 \end{align*}
then, $y(t,\xi)$ solves the differential equation 
\begin{align}
 \frac{\partial y}{\partial t} &=  \nu\frac{\partial^2 y}{\partial \xi^2}+f\label{dif2}\\
 y\big|_{t=0}&= \E(X_0).\nonumber
\end{align}

We solve numerically this equation by using the Matlab library {\it pdepe} and we compare our results by using the spectral method with the one
obtained with the pdepe Matlab library.

% 
% \begin{figure}[!ht]                 
% \includegraphics[scale=0.35]{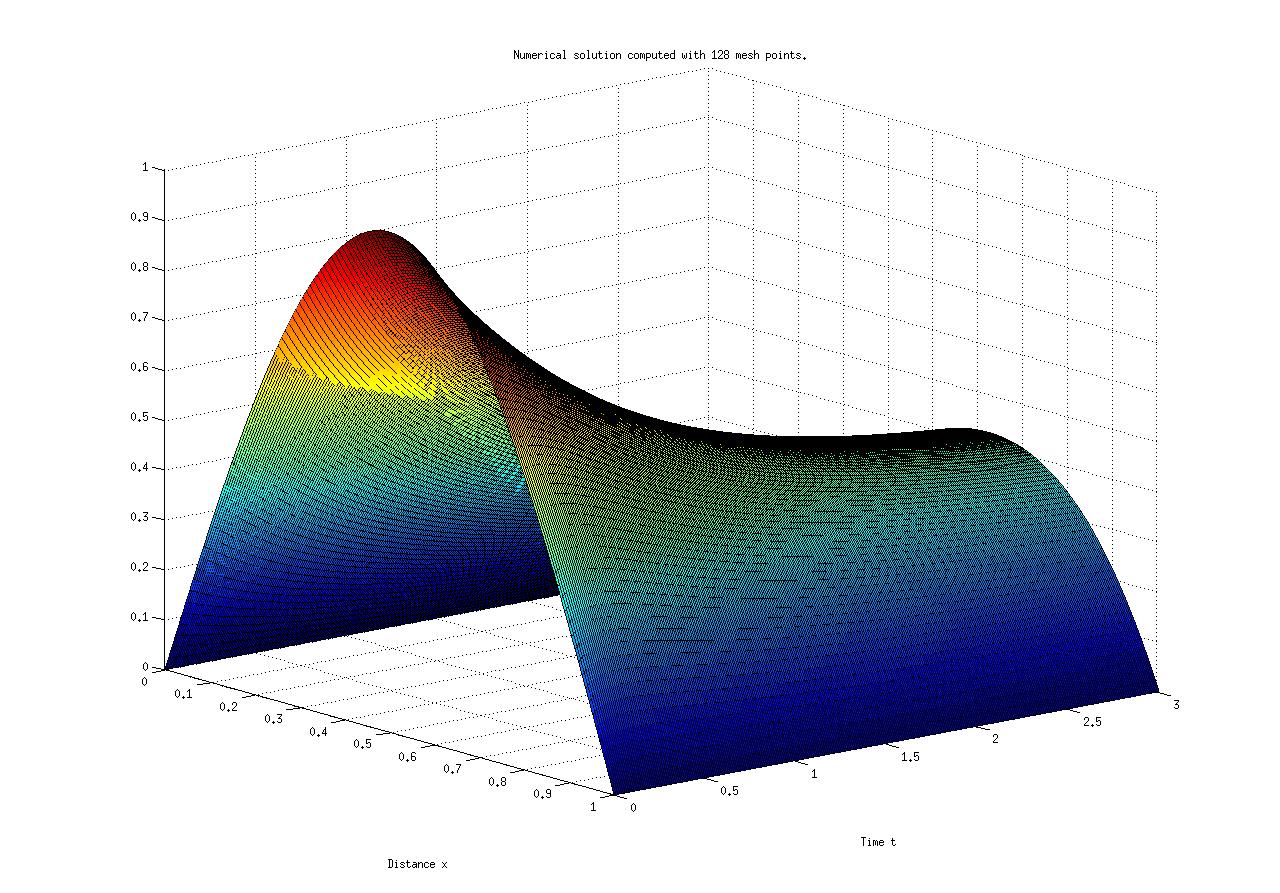}
% \caption{Diffusion equation with $\nu=0.01$ obtained by using the pdepe library.}
% \end{figure}\vspace*{0.1cm}
%  

\subsubsection*{Results on the simulation }

We have the following graphs of simulations using this method with differents values of $J^{N,M}$, $N=7,8$. We make a comparison
with the solution of the deterministic equation, as was described in subsection \ref{det1}, by using the matlab library {\it pdepe}.

First we show the result on the simulation for the evaluation functional. The second group of graphs shows the simulation for the 
second functional. The results were obtained with the coefficient $\nu=0.1$. 
\vspace*{0.1cm}

\begin{figure}[H]        
\includegraphics[width = 1.95in]{Diffusion_pdepe_nu=0_1.jpg}
\includegraphics[width = 1.95in]{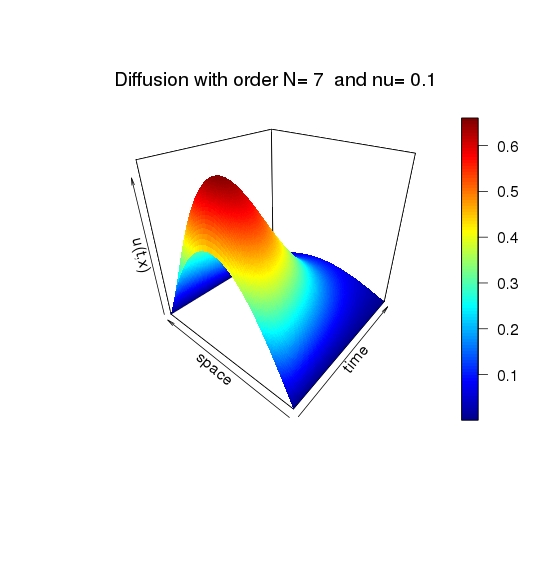} 
\includegraphics[width = 1.95in]{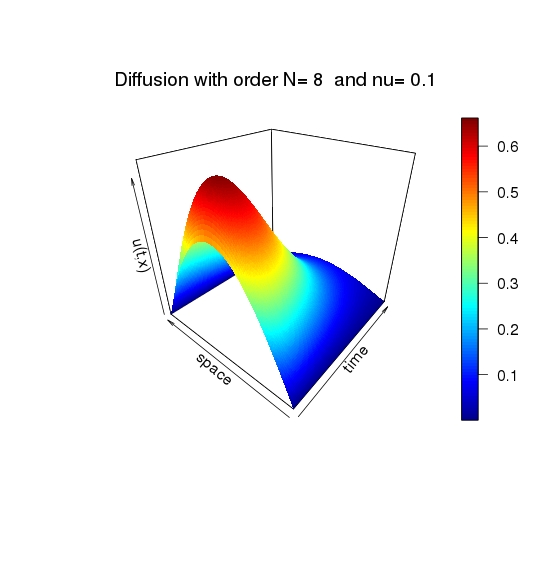} 
\caption{Simulations for the Diffusion equation with the spectral method, for $N=7,8$ 
and $\nu=0.1$ with $u_0^{\xi_0}(g)=g(\xi_0)$.}
\label{graph-simu_Difnu1}
\end{figure}\vspace*{0.1cm}

\begin{figure}[H]        
\includegraphics[width = 1.95in]{Diffusion_pdepe_nu=0_1.jpg}
\includegraphics[width = 1.95in]{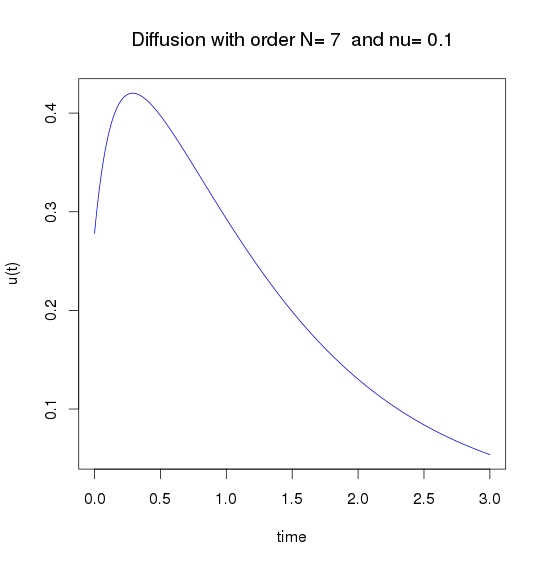} 
\includegraphics[width = 1.95in]{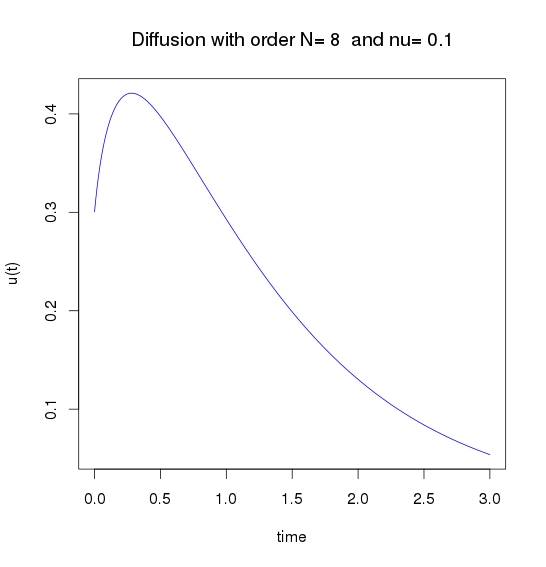} 
\caption{Simulations for the Diffusion equation with the spectral method, for $N=7,8$, $\nu=0.1$
and $u_0(g)=\int_0^1 g(\xi) d\xi)$..}
\label{graph-simu_Difnu3}
\end{figure}\vspace*{0.1cm}

\subsection{Stochastic Fisher-KPP Equation in an interval}

Set $\mathcal{H}=L^2(0,1)$. We consider the stochastic Fisher-KPP equation in the interval $[0,1]$: 
\begin{align}
 dX(t,\xi)&=\Big[\nu\partial_{\xi}^2X(t,\xi)+X(t,\xi)(1-X(t,\xi)) \Big]dt+dW_t(t,\xi),\quad t>0,\quad \xi\in(0,1)\label{fisher}\\
 X(t,0)&=X(t,1)=0,\quad t>0,\nonumber\\
 X(0,\xi)&=x(\xi),\quad x\in\mathcal{H}\nonumber
\end{align}
$W$ is a cylindrical Wiener process on $\mathcal{H}$, associated to a stochastic basis 
$(\Omega,\mathcal{F},\Pb,(\mathcal{F}_t)_{t\ge 0})$.  $\nu$ is the viscosity coefficient. We will consider the initial condition  
$X(0,\xi)=sech^2(5(\xi-0.5)) $.

We rewrite the Fisher-KPP  equation as an abstract differential equation in $\mathcal{H}$. Set $A=\nu\partial_\xi^2$ and
$B(x)=x(1-x)$, $x\in\mathcal{H}$, with domains $D(A)=H^2(0,1)\cap H_0^1(0,1)$ and $D(B)= H_0^1(0,1)$, respectively. 
Then, \eqref{fisher} can be rewriten as
\begin{align}
 dX&=[AX+B(X)]dt+dW_t\\
 X(0)&=x\quad x\in\mathcal{H}\nonumber.
\end{align}

The operator $A$ is selfadjoint with a complete orthonormal system of eigenfunctions in $\mathcal{H}$ given by
\[
 e_k(\xi)=\sqrt{2}\sin(k\pi\xi), \qquad \xi\in[0,1], k\in\IN.
\]
Moreover $A$ satisfies $Ae_k=-\nu\pi^2k^2e_k$, for $k\in\IN$.

As before we define $ u(t,x)=\E\big[ u_0(X_t^x)\big]$ and then $u(t,x)$ satisfies the Kolmogorov equation
\begin{equation*}
\frac{\partial u}{\partial t}= \frac{1}{2}Tr(QD^2u)+ \langle Ax, Du \rangle_\mathcal{H} + \langle B(x),Du \rangle_\mathcal{H},\qquad x\in D(A).
\end{equation*}
Results on existence and uniqueness of the solution to the Kolmogorov equation can be found, for instance, in \cite[Chapter 4]{da}.

\vspace*{-0.1cm}About the functional $u_0:\mathcal{H}\rightarrow \IR$ we will consider two cases :  
 \begin{align*}
  u_0^{\xi_0}(g)&:= g(\xi_0).\qquad \mbox{for fixed } \xi_0\in(0,1)\\
 &\mbox{\hspace*{-3cm} and }\\
 \vspace*{-0.1cm}  u_0(g)&:=\int_0^1 g(\xi) d\xi.
 \end{align*}

We now apply the numerical method. We write the solution $u$ as 
\begin{align*}
 u(t,x)=\sum_{\bm{n}} u_{\bm{n}}(t) H_{\bm{n}}(x).
\end{align*}
and by following the procedure done before we arrive to an infinite system of ordinary differential equations: 
\begin{equation}
   \dot{u}_{\bm{m}}(t) =   -u_{\bm{m}}(t) \lambda_{\bm{m}} 
  + \sum_{\bm{n}\in \mathcal{J}} u_{\bm{n}}(t) C_{\bm{n},\bm{m}},\qquad \bm{n},\bm{m}\in\mathcal{J}\label{inf-sys-fisher}
\end{equation}
where $ C_{\bm{n},\bm{m}}$ is given by
\[
C_{\bm{n},\bm{m}}=\int_\mathcal{H}
\big\langle B(x), D_x H_{\bm{n}}(x) \big\rangle_\mathcal{H} H_{\bm{m}}(x) \mu(dx) 
\]

we need to calculate the value of the constants $C_{\bm{n},\bm{m}} $, then we need to calculate expressions 
such as $ B(x), D_x H_{\bm{n}}(x)$. 

Focus on the term $B(x)=x(1-x)$. By writing $x=\sum_k\beta_ke_k$, with $\beta_k:= \langle x,e_k \rangle_\mathcal{H}$ 
we have
\begin{align*}
B(x)&= \Big(\sum_k\beta_ke_k \Big) \Big(1-\sum_k\beta_ke_k \Big)=\sum_k\beta_ke_k- \sum_k\sum_l\beta_l\beta_ke_le_k 
\end{align*}

For the expression $D_x H_{\bm{n}}(x)$ we have 
\begin{align*}
 D_x H_{\bm{n}}(x)=\sum_{j=1}^\infty \prod_{\stackrel{i=1}{i\ne j}}^\infty P_{n_i}\big(\langle x,\Lambda^{-1/2}e_i \rangle_{\mathcal{H}}  \big)
 P_{n_j}'\big(\langle x,\Lambda^{-1/2}e_j \rangle_{\mathcal{H}}  \big) \Lambda^{-1/2}e_j 
\end{align*}
Setting $\Lambda = (-A)^{-1}$ and by recalling that  $Ae_j=-\nu\pi^2j^2e_j$ we have $ \Lambda^{-1/2}e_j =
 \sqrt{2\nu}\pi|j| e_j$, 
and by using the last expression we have, 
\begin{align*}
C_{\bm{n},\bm{m}}&= \int_{\mathcal{H}} H_{\bm{m}}(x)\mu(dx) \sum_{j=1}^\infty \prod_{\stackrel{i=1}{i\ne j}}^\infty P_{n_i}
\big(\langle x,\Lambda^{-1/2}e_i \rangle_{\mathcal{H}}  \big)
 P_{n_j}'\big(\langle x,\Lambda^{-1/2}e_j \rangle_{\mathcal{H}}  \big) \sqrt{2\nu}\pi|j| \\
 &\qquad \times \Big[\sum_k \beta_k \big\langle e_k, e_j\big\rangle_{\mathcal{H}}
 - \sum_l\sum_k \beta_l \beta_k \big\langle e_l e_k, e_j\big\rangle_{\mathcal{H}}\Big]\\
 &= \int_{\mathcal{H}} \mu(dx) \sum_{j=1}^\infty \sqrt{2\nu}\pi|j| P_{m_j}\big(\langle x,\Lambda^{-1/2}e_j \rangle_{\mathcal{H}}  \big)
  P_{n_j}'\big(\langle x,\Lambda^{-1/2}e_j \rangle_{\mathcal{H}}  \big)\\
  &\qquad\times \prod_{\stackrel{i=1}{i\ne j}}^\infty P_{n_i} \big(\langle x,\Lambda^{-1/2}e_i \rangle_{\mathcal{H}}  \big) 
  P_{m_i}\big(\langle x,\Lambda^{-1/2}e_i \rangle_{\mathcal{H}}  \big)
\Big[ \beta_j 
 - \sum_l\sum_k \beta_l \beta_k \big\langle e_l e_k, e_j\big\rangle_{\mathcal{H}}\Big] .
\end{align*}

For $N_1\in\IN$ define as before the set $S_{N_1}=\{\bm{n}_1,\bm{n}_2,\ldots,\bm{n}_{N_1}: \bm{n}_i\in J^{M,N}, i=1,\ldots,N_1 \}$. Moreover, 
for $\bm{n},\bm{m}\in S_{M}$ define
\begin{align}
\bar C_{\bm{n},\bm{m}}&:=  \sum_{j=1}^M  \int_{\IR^{M}} P_{m_j}(\xi_j) P_{n_j}'(\xi_j)\mu(d\xi_j) \nonumber\\
&\qquad\qquad \times\prod_{\stackrel{i=1}{i\ne j}}^M P_{m_i}(\xi_i) P_{n_i}(\xi_i)\frac{\mu(d\xi_i)}{\lambda_i} 
\Big[\beta_j - \sum_{l=1}^M \sum_{k=1}^M \beta_l\beta_k \big\langle e_l e_k, e_j\big\rangle_{\mathcal{H}}\Big] \label{C1fisher}.
\end{align}
and the finite system of ordinary differential equations: 
\begin{equation}
   \dot{u}_{\bm{m}}(t) =   -u_{\bm{m}}(t) \lambda_{\bm{m}} 
  + \sum_{\bm{n}\in S_{M}} u_{\bm{n}}(t) \bar C_{\bm{n},\bm{m}},\qquad \mbox{ for each }\bm{m}\in S_{M} \mbox{ and } \bm{n} \in S_{M}.
  \label{fin-sys-fisher}
\end{equation}
Then \eqref{fin-sys-fisher} approximates to the infinite system of ordinary differential equations \eqref{inf-sys-fisher} when 
$N,M \rightarrow \infty $. We use the system  \eqref{fin-sys-fisher} to approximate the solution of the FPK equation associated 
to the Fisher-KPP equation.

\subsubsection{Deterministic equation associated with the stochastic Fisher-KPP Equation.}\label{det2}
 
 Set 
 \begin{align*}
  y(t,\xi)=\E\big[X_t(\xi)\big]
 \end{align*}
then, $y(t,\xi)$ solves the differential equation 
\begin{align}
 \frac{\partial y}{\partial t} &=  \nu\frac{\partial^2 y}{\partial \xi^2}+ +y(t,\xi)\big[1-y(t,\xi)\big]\label{dif3}\\
 y\big|_{t=0}&= \E(X_0).\nonumber
\end{align}

We solve numerically this equation by using the Matlab library {\it pdepe} and we compare our results by using the spectral method with the one
obtained with the pdepe Matlab library.

\subsubsection*{Results on the simulation }

We have the following graphs of simulations using the proposed method with differents values of $J^{N,M}$, $N=4,5$. We make a comparison
with the solution of the deterministic equation, as was described in subsection \ref{det2}, by using the matlab library {\it pdepe}.

We show the results on the simulation for the evaluation functional. The second graph shows the simulation for the 
second functional. The results were obtained with the coefficient $\nu=0.1$. 
\vspace*{0.1cm}

\begin{figure}[H]
 \includegraphics[width = 1.95in]{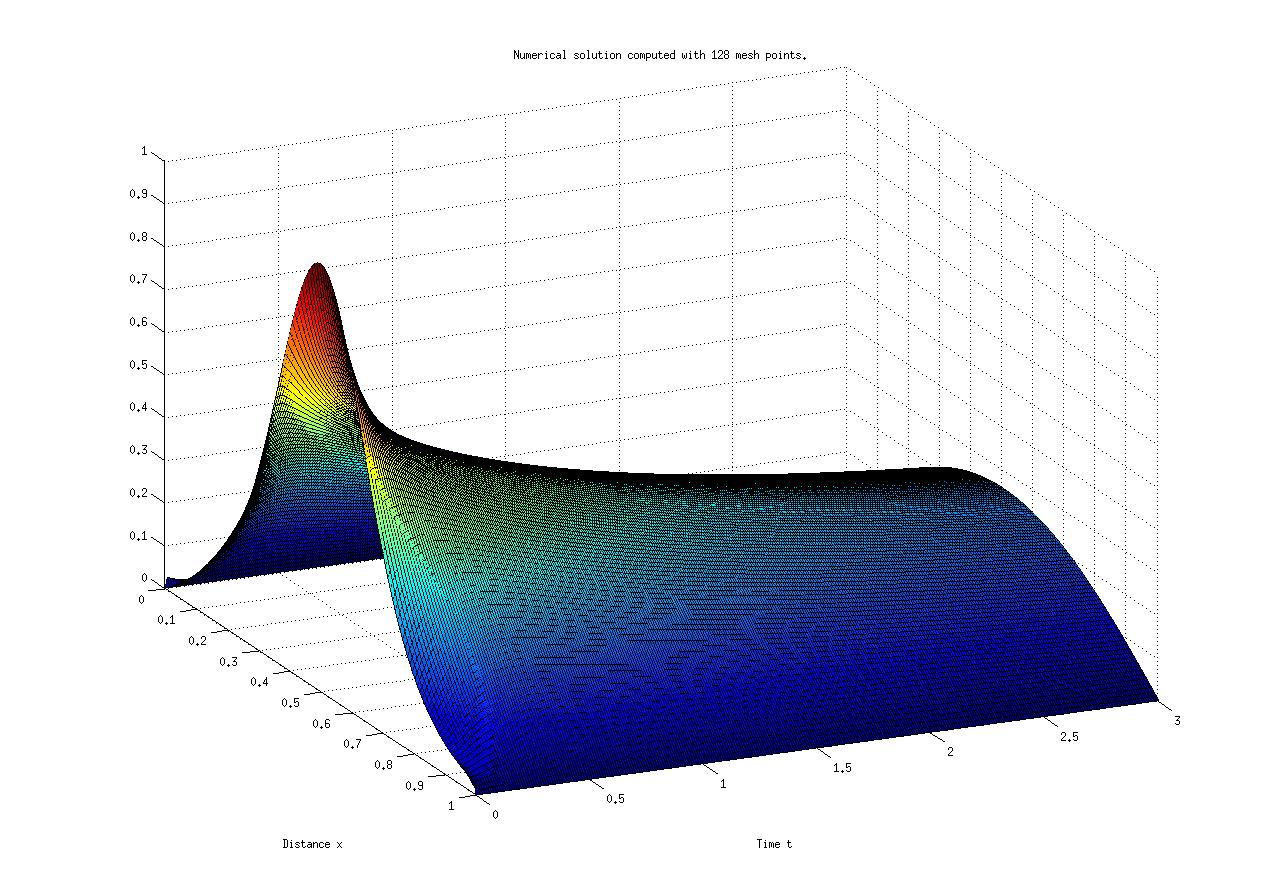}\hspace*{-0.5cm}  
\includegraphics[width = 1.95in]{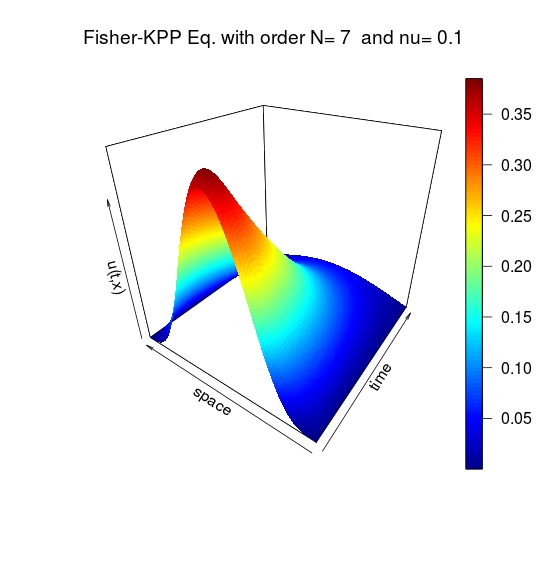}
 \includegraphics[width = 1.95in]{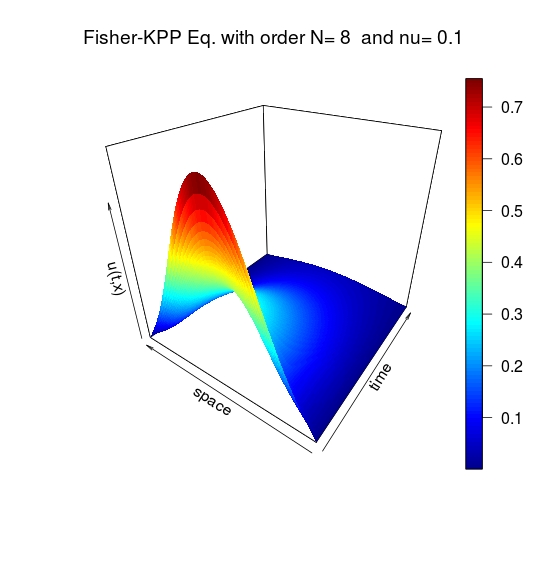} 
 \caption{Simulations for the Fisher-KPP equation with the Matlab library {\it pdepe} and with
 the spectral method for $N=7,8$, $u_0^{\xi_0}(g)=g(\xi_0)$.}
 \label{graph-simu_nu_fisher4.1}
\end{figure}

% 
%  \begin{figure}[H]
% %\includegraphics[scale=0.095]{Diffusion_pdepe_nu_0_1.jpg}
% \includegraphics[width = 1.95in]{fisher-kpp-pdepe_0_01.jpg}\hspace*{-0.5cm}
% \includegraphics[width = 1.95in]{FisherKPP_Eq_N=4_nu_0_01_All.jpeg}
%  \includegraphics[width = 1.95in]{FisherKPP_Eq_N=5_nu_0_01_All.jpeg} 
%  % \includegraphics[width = 1.95in]{FisherKPP_Eq_N=6_nu_0_01_All.jpeg} 
%  \caption{Simulations for the Fisher-KPP equation with the Matlab library {\it pdepe} and with
%  the spectral method for $N=4,5$, $u_0^{\xi_0}(g)=g(\xi_0)$.}
% \label{graph-simu_nu_fisher1.4}
% \end{figure}
% %  

\begin{figure}[H]
 \includegraphics[width = 1.95in]{fisher-kpp-pdepe_0_1.jpg}\hspace*{-0.5cm}  
\includegraphics[width = 1.95in]{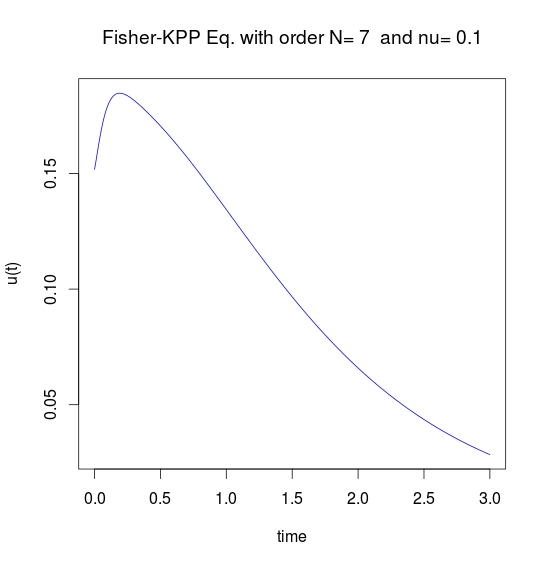}
 \includegraphics[width = 1.95in]{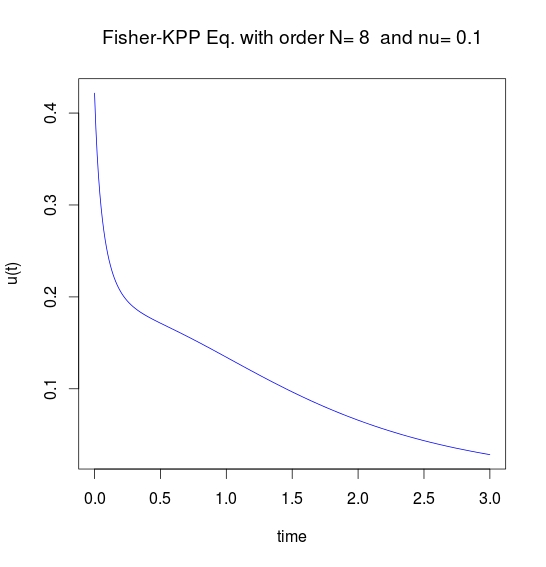} 
 \caption{Simulations for the Fisher-KPP equation with the Matlab library {\it pdepe} and with
 the spectral method for $N=7,8$, $u_0(g)=\int_0^1 g(\xi) d\xi$.}
 \label{graph-simu_nu_fisher5.1}
\end{figure}

% 
% 
%  \begin{figure}[H]
% \includegraphics[scale=0.095]{fisher-kpp-pdepe_0_01.jpg}
% \includegraphics[width = 1.95in]{FisherKPP_Eq_N=4_nu_0_01_u0.jpeg}
% \includegraphics[width = 1.95in]{FisherKPP_Eq_N=5_nu_0_01_u0.jpeg}
% %\includegraphics[width = 1.95in]{FisherKPP_Eq_N=6_nu_0_01_u0.jpeg} 
%  \caption{Simulations for the Fisher-KPP equation with the Matlab library {\it pdepe} and with
%  the spectral method for $N=4,5,6$, $u_0(g)=\int_0^1 g(\xi) d\xi$.}
% \label{graph-simu_nu_fisher1.5}
% \end{figure}

\subsection{Stochastic Burgers Equation in an interval}

Set $\mathcal{H}=L^2(0,1)$. We consider the stochastic Burgers equation in the interval $[0,1]$: 
\begin{align}
 dX(t,\xi)&=\Big[\nu\partial_{\xi}^2X(t,\xi)+\frac{1}{2}\partial_{\xi}(X^2(t,\xi)) \Big]dt+dW_t(t,\xi),\quad t>0,\quad \xi\in(0,1)\label{burg}\\
 X(t,0)&=X(t,1)=0,\quad t>0,\nonumber\\
 X(0,\xi)&=x(\xi),\quad x\in\mathcal{H}
\end{align}
$W$ is a cylindrical Wiener process on $\mathcal{H}$, associated to a stochastic basis $(\Omega,\mathcal{F},\Pb,(\mathcal{F}_t)_{t\ge 0})$. 
$\nu$ is the viscosity coefficient.

We rewrite the Burgers equation as an abstract differential equation in $\mathcal{H}$. Set $A=\nu\partial_\xi^2$ and
$B(x)=\tfrac{1}{2}\partial_\xi(x^2)$, $x\in\mathcal{H}$, with domains $D(A)=H^2(0,1)\cap H_0^1(0,1)$ and $D(B)= H_0^1(0,1)$, respectively. 
Then, \eqref{burg} can be rewriten as
\begin{align}
 dX&=[AX+B(X)]dt+dW_t\\
 X(0)&=x\quad x\in\mathcal{H}\nonumber.
\end{align}

The operator $A$ is selfadjoint with a complete orthonormal system of eigenfunctions in $\mathcal{H}$ given by
\[
 e_k(\xi)=\sqrt{2}\sin(k\pi\xi), \qquad \xi\in[0,1], k\in\IN.
\]
Moreover $A$ satisfies $Ae_k=-\nu\pi^2k^2e_k$, for $k\in\IN$.

As before we define $ u(t,x)=\E\big[ u_0(X_t^x)\big]$ and then $u(t,x)$ satisfies the Kolmogorov equation
\begin{equation*}
\frac{\partial u}{\partial t}= \frac{1}{2}Tr(QD^2u)+ \langle Ax, Du \rangle_\mathcal{H} + \langle B(x),Du \rangle_\mathcal{H},\qquad x\in D(A).
\end{equation*}

Results on existence and uniqueness of the solution to the Kolmogorov equation can be found, for instance, in \cite[Chapter 5]{da}.

\vspace*{-0.1cm} We will consider again two types of functionals :  
 \begin{align*}
  u_0^{\xi_0}(g)&:= g(\xi_0).\qquad \mbox{for fixed } \xi_0\in(0,1)\\
 &\mbox{\hspace*{-3cm} and }\\
 \vspace*{-0.1cm}  u_0(g)&:=\int_0^1 g(\xi) d\xi.
 \end{align*}

We now apply the numerical method. We write the solution $u$ as 
\begin{align*}
 u(t,x)=\sum_{\bm{n}} u_{\bm{n}}(t) H_{\bm{n}}(x).
\end{align*}
and by following the procedure done before we arrive to an infinite system of ordinary differential equations: 
\begin{equation}
   \dot{u}_{\bm{m}}(t) =   -u_{\bm{m}}(t) \lambda_{\bm{m}} 
  + \sum_{\bm{n}\in \mathcal{J}} u_{\bm{n}}(t) C_{\bm{n},\bm{m}},\qquad \bm{n},\bm{m}\in\mathcal{J}\label{inf-sys-burg}
\end{equation}
where $ C_{\bm{n},\bm{m}}$ is given by
\[
C_{\bm{n},\bm{m}}=\int_\mathcal{H}
\big\langle B(x), D_x H_{\bm{n}}(x) \big\rangle_\mathcal{H} H_{\bm{m}}(x) \mu(dx) 
\]

we need to calculate the value of the constants $C_{\bm{n},\bm{m}} $, then we need to calculate expressions 
such as $ B(x), D_x H_{\bm{n}}(x)$. 

Focus on the term $B(x)=\tfrac{1}{2}\partial_\xi(x^2)$. By writing $x=\sum_k\beta_ke_k$, with $\beta_k:= \langle x,e_k \rangle_\mathcal{H}$ 
we have
\begin{align*}
B(x)&=\frac{1}{2}\partial_\xi \Big(\sum_k\beta_ke_k \Big)^2=\frac{1}{2}\partial_\xi\Big[ \sum_l\sum_k\beta_l \beta_k e_l e_k \Big] 
=\frac{1}{2}\sum_l\sum_k\beta_l \beta_k \big(e_l e_k'+e_l' e_k\big).
\end{align*}

For the expression $D_x H_{\bm{n}}(x)$ we have 
\begin{align*}
 D_x H_{\bm{n}}(x)=\sum_{j=1}^\infty \prod_{\stackrel{i=1}{i\ne j}}^\infty P_{n_i}\big(\langle x,\Lambda^{-1/2}e_i \rangle_{\mathcal{H}}  \big)
 P_{n_j}'\big(\langle x,\Lambda^{-1/2}e_j \rangle_{\mathcal{H}}  \big) \Lambda^{-1/2}e_j 
\end{align*}
Setting $\Lambda = (-A)^{-1}$ and by recalling that  $Ae_j=-\nu\pi^2j^2e_j$ we have $ \Lambda^{-1/2}e_j =
 \sqrt{2\nu}\pi|j| e_j$, 
and by using the last expression we have, 
\begin{align*}
C_{\bm{n},\bm{m}}&=\frac{1}{2} \int_{\mathcal{H}} H_{\bm{m}}(x)\mu(dx) \sum_{j=1}^\infty \prod_{\stackrel{i=1}{i\ne j}}^\infty P_{n_i}
\big(\langle x,\Lambda^{-1/2}e_i \rangle_{\mathcal{H}}  \big)
 P_{n_j}'\big(\langle x,\Lambda^{-1/2}e_j \rangle_{\mathcal{H}}  \big) \sqrt{2\nu}\pi|j| \\
 &\qquad \times \sum_l\sum_k \beta_l \beta_k \big\langle e_l e_k'+e_l' e_k, e_j\big\rangle_{\mathcal{H}}\\
 &=\frac{1}{2} \int_{\mathcal{H}} \mu(dx) \sum_{j=1}^\infty \sqrt{2\nu}\pi|j| P_{m_j}\big(\langle x,\Lambda^{-1/2}e_j 
 \rangle_{\mathcal{H}}  \big)
  P_{n_j}'\big(\langle x,\Lambda^{-1/2}e_j \rangle_{\mathcal{H}}  \big)\\
  &\qquad\times \prod_{\stackrel{i=1}{i\ne j}}^\infty P_{n_i} \big(\langle x,\Lambda^{-1/2}e_i \rangle_{\mathcal{H}}  \big) 
  P_{m_i}\big(\langle x,\Lambda^{-1/2}e_i \rangle_{\mathcal{H}}  \big) \sum_l\sum_k \beta_l \beta_k \big\langle e_l e_k'
  +e_l' e_k, e_j\big\rangle_{\mathcal{H}} .
\end{align*}

For $N_1\in\IN$ define as before the set $S_{N_1}=\{\bm{n}_1,\bm{n}_2,\ldots,\bm{n}_{N_1}: \bm{n}_i\in J^{M,N}, i=1,\ldots,N_1 \}$. 
Moreover, for $\bm{n},\bm{m}\in S_{M}$ define
\begin{align}
\bar C_{\bm{n},\bm{m}}&:=\frac{1}{2}  \sum_{j=1}^M \sqrt{2\nu}\pi|j| 
\int_{\IR^{M}} P_{m_j}(\xi_j) P_{n_j}'(\xi_j)\mu(d\xi_j)\nonumber\\
&\qquad\qquad \times\prod_{\stackrel{i=1}{i\ne j}}^M P_{m_i}(\xi_i) P_{n_i}(\xi_i)\mu(d\xi_i)  
\sum_{l=1}^M\sum_{k=1}^M \beta_l \beta_k \big\langle e_l e_k'+e_l' e_k, e_j\big\rangle_{\mathcal{H}}\label{C1burger} .
\end{align}
and the finite system of ordinary differential equations: 
\begin{equation}
   \dot{u}_{\bm{m}}(t) =   -u_{\bm{m}}(t) \lambda_{\bm{m}} 
  + \sum_{\bm{n}\in S_{M}} u_{\bm{n}}(t) \bar C_{\bm{n},\bm{m}},\qquad \mbox{ for each }\bm{m}\in S_{M} \mbox{ and } \bm{n} \in S_{M}.
  \label{fin-sys-burg}
\end{equation}
Then \eqref{fin-sys-burg} approximates to the infinite system of ordinary differential equations \eqref{inf-sys-burg} when 
$N,M \rightarrow \infty $. We use the system  \eqref{fin-sys-burg} to approximate the solution of the FPK equation associated 
with the Burgers equation.

\subsubsection{Deterministic equation associated with the stochastic Burgers Equation.}\label{det3}
 
 Set 
 \begin{align*}
  y(t,\xi)=\E\big[X_t(\xi)\big]
 \end{align*}
then, $y(t,\xi)$ solves the differential equation 
\begin{align}
 \frac{\partial y}{\partial t} &=  \nu\frac{\partial^2 y}{\partial \xi^2}+ +\frac{1}{2}\partial_{\xi}(y^2(t,\xi))\label{dif4}\\
 y\big|_{t=0}&= \E(X_0).\nonumber
\end{align}

We solve numerically this equation by using the Matlab library {\it pdepe} and we compare our results by using the spectral method 
with the one obtained with the pdepe Matlab library.

\subsubsection*{Results on the simulation }

The following graphs show simulations by using the proposed method with differents values of $J^{N,M}$, $N=4,5$. We make a comparison
with the solution of the deterministic equation, as was described in subsection \ref{det3}, by using the matlab library {\it pdepe}.

Tthe results on the simulation for the evaluation functional are in the first group of graphs. The second graph shows the 
simulation for the second functional. The results were obtained with the coefficient $\nu=0.2,0.1,0.01$. 
\vspace*{0.1cm}

 \begin{figure}[H]
\includegraphics[width = 1.95in]{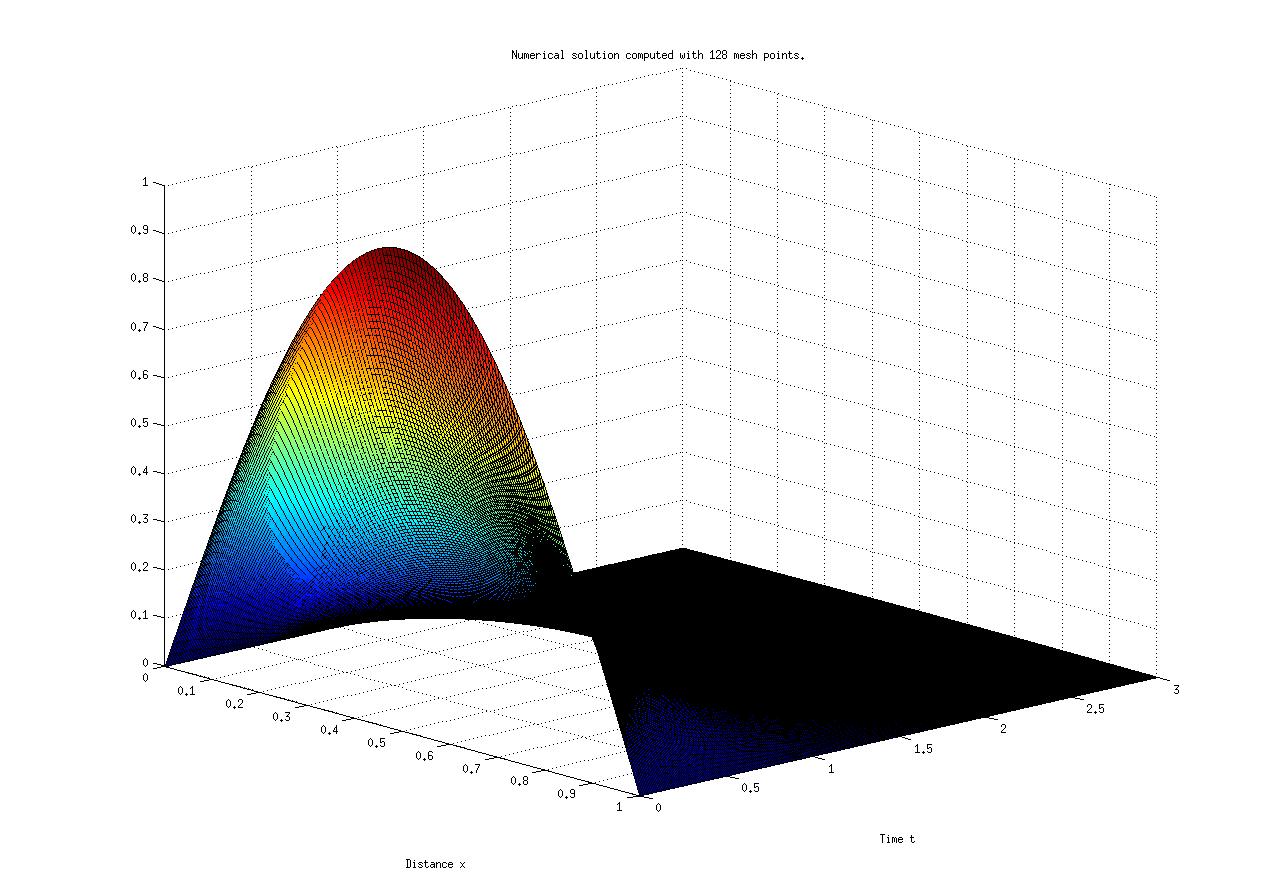}\hspace*{-0.5cm}
\includegraphics[width = 1.95in]{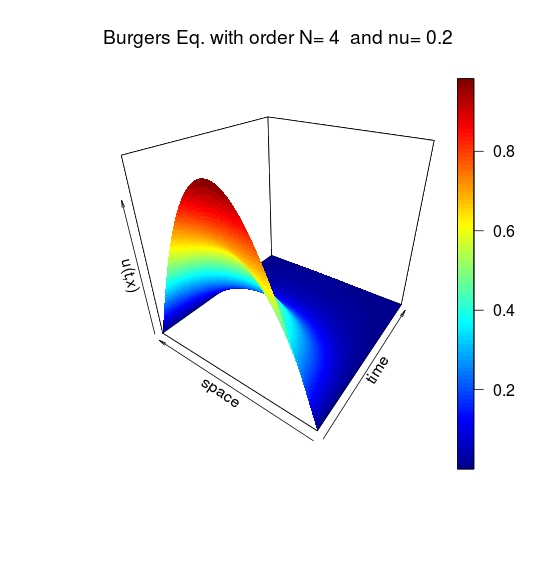}
 \includegraphics[width = 1.95in]{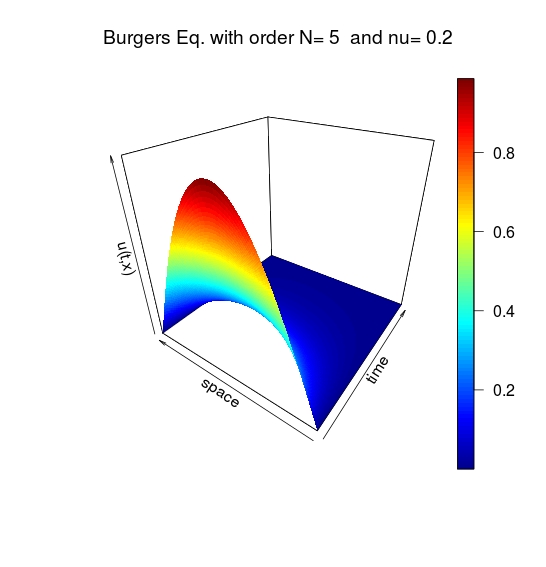} 
 \caption{Simulations for the Burgers equation with the Matlab library {\it pdepe} and with
 the spectral method for $N=4,5$, $u_0^{\xi_0}(g)=g(\xi_0)$.}
\label{graph-simu_nu1.4}
\end{figure}

\begin{figure}[H]
 \includegraphics[width = 1.95in]{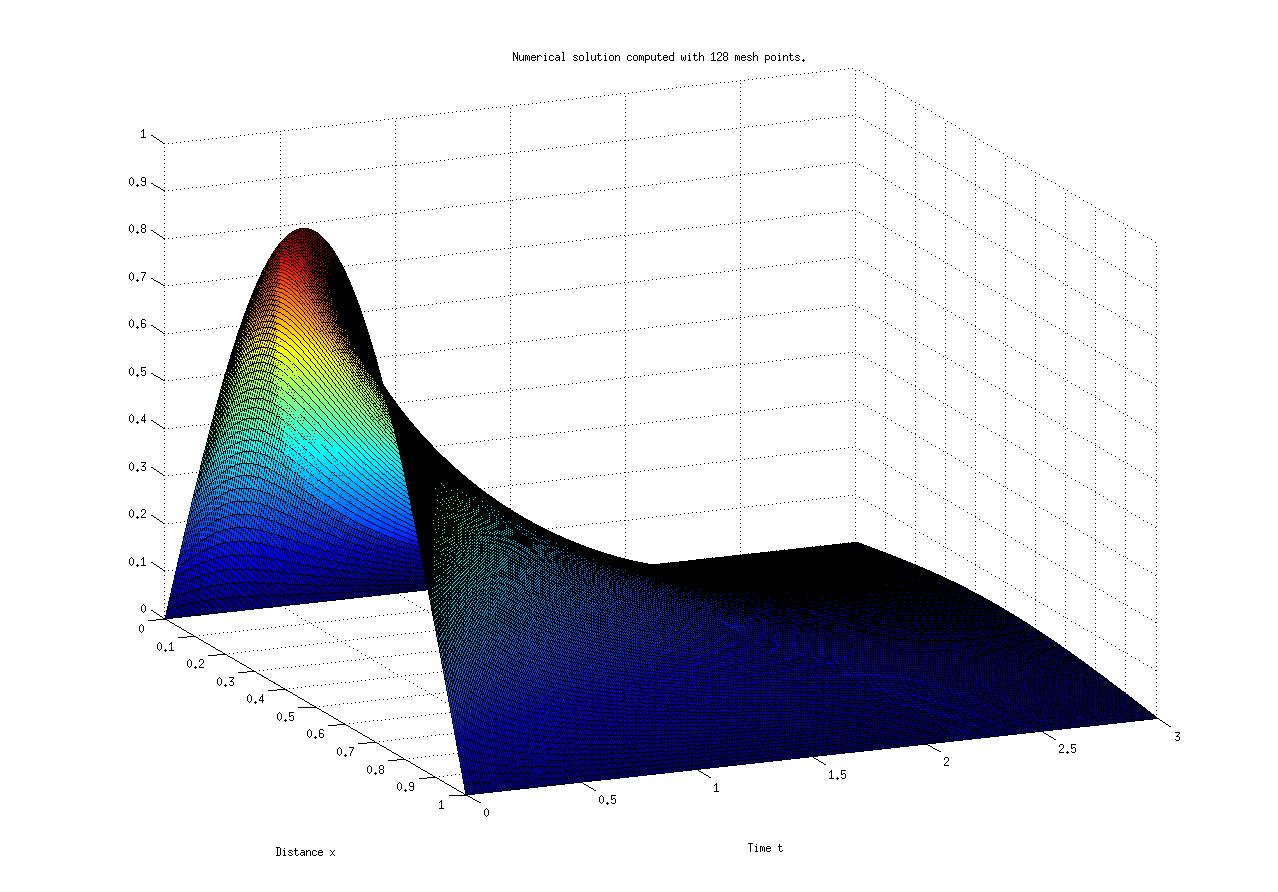}\hspace*{-0.5cm}  
\includegraphics[width = 1.95in]{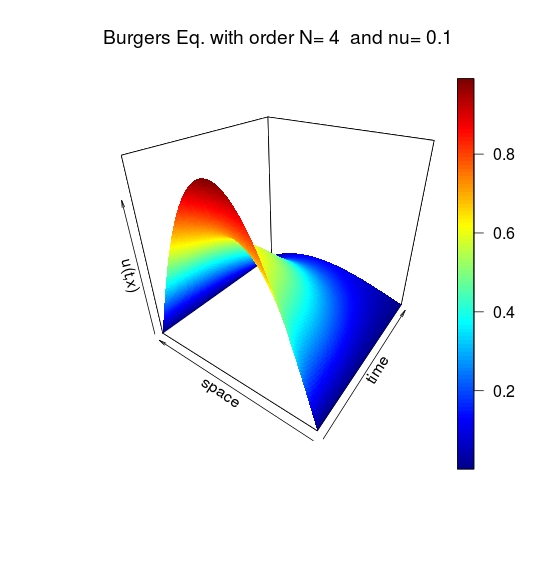}
 \includegraphics[width = 1.95in]{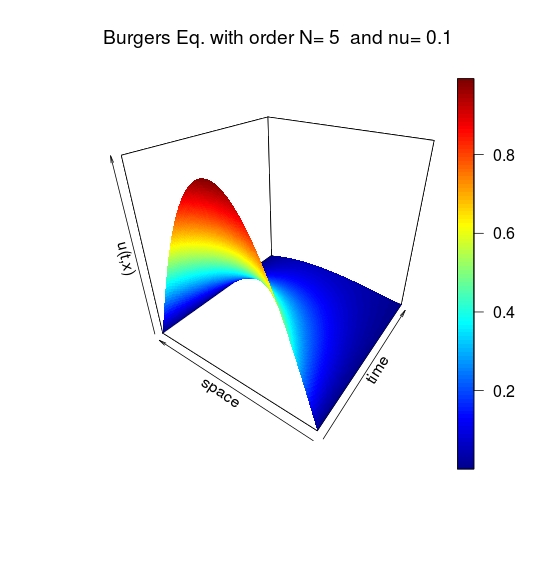} 
 \caption{Simulations for the Burgers equation with the Matlab library {\it pdepe} and with
 the spectral method for $N=4,5$, $u_0^{\xi_0}(g)=g(\xi_0)$.}
 \label{graph-simu_nu4.1}
\end{figure}

 \begin{figure}[H]
  \includegraphics[width = 1.95in]{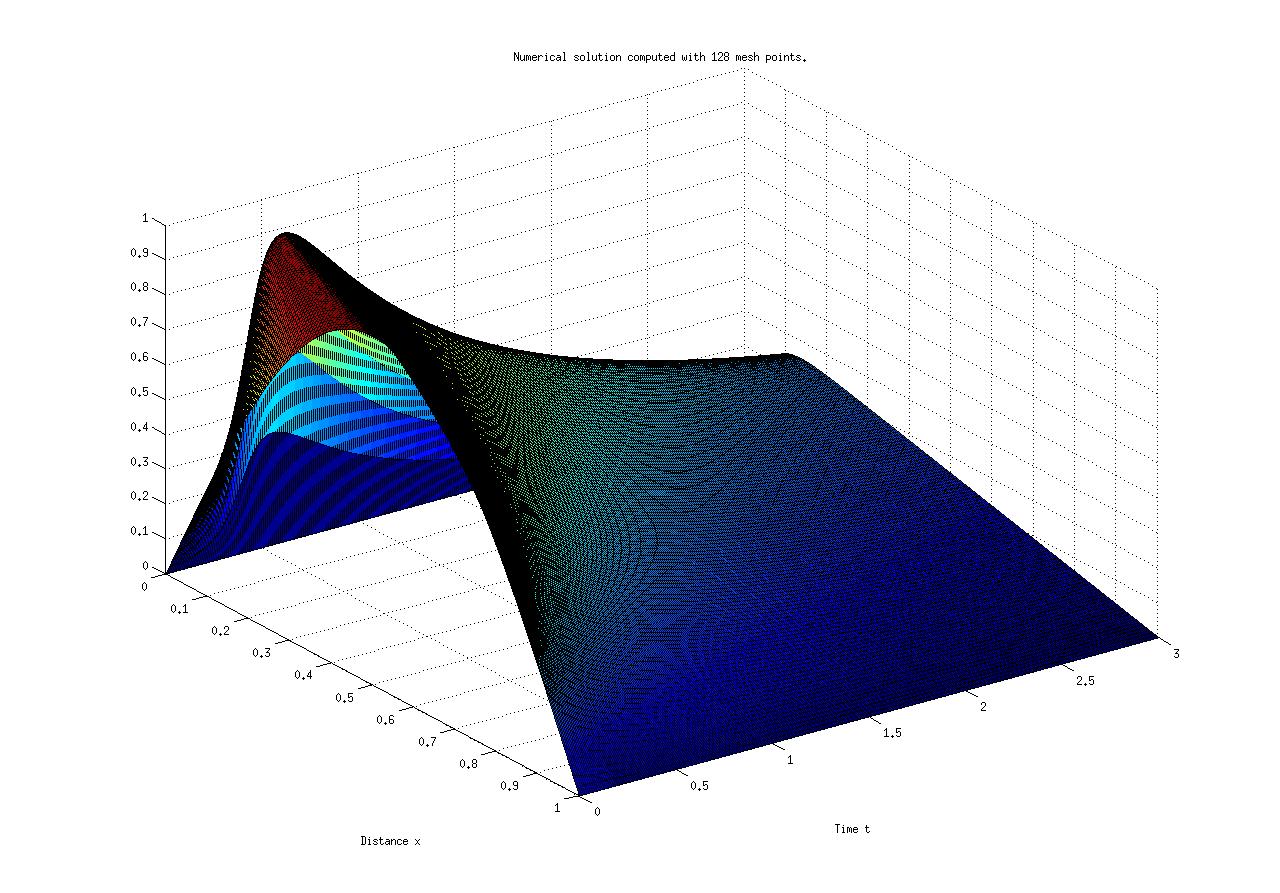}\hspace*{-0.5cm} 
 \includegraphics[width = 1.95in]{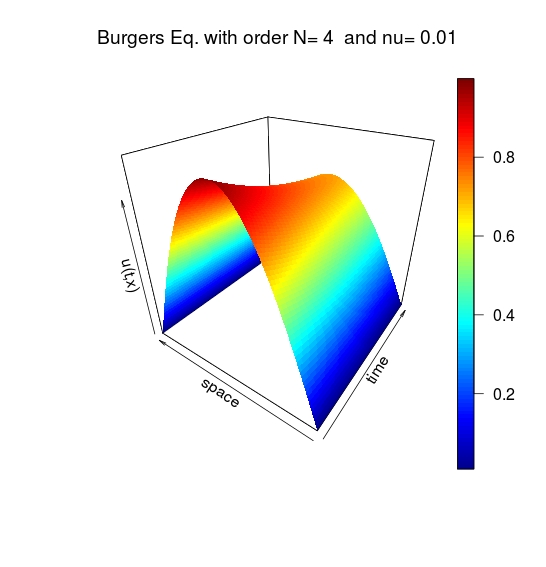}
 \includegraphics[width = 1.95in]{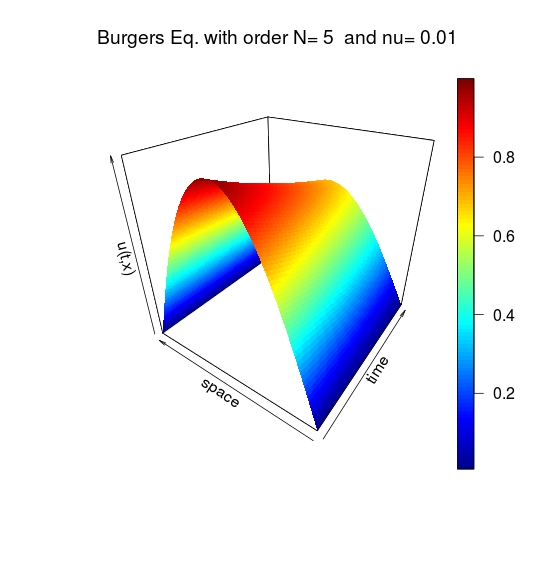} 
 
 \caption{Simulations for the Burgers equation with the Matlab library {\it pdepe}  and with
 the spectral method for $N=4,5$, $u_0^{\xi_0}(g)=g(\xi_0)$.}
 \label{graph-simu_nu2.1}
\end{figure}

 \begin{figure}[H]
\includegraphics[width = 1.95in]{Burgers_PDEPE_nu=0_2.jpg}\hspace*{-0.5cm}
\includegraphics[width = 1.95in]{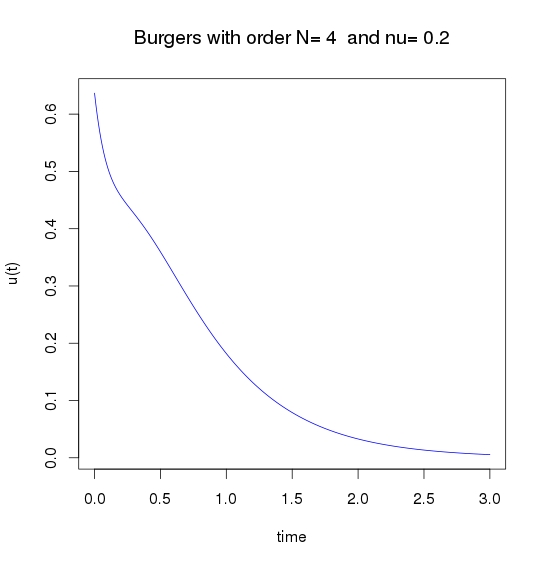}
 \includegraphics[width = 1.95in]{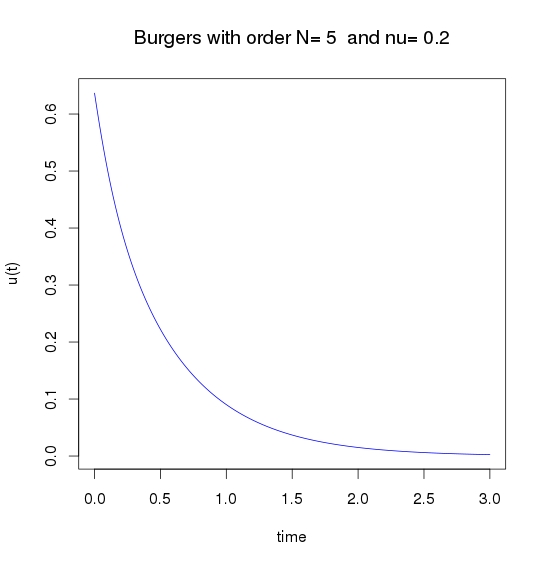} 
 \caption{Simulations for the Burgers equation with the Matlab library {\it pdepe} and with
 the spectral method for $N=4,5$, $u_0(g)=\int_0^1 g(\xi) d\xi$.}
\label{graph-simu_nu1.5}
\end{figure}

\begin{figure}[H]
 \includegraphics[width = 1.95in]{Burgers_PDEPE_nu=0_1.jpg}\hspace*{-0.5cm}  
\includegraphics[width = 1.95in]{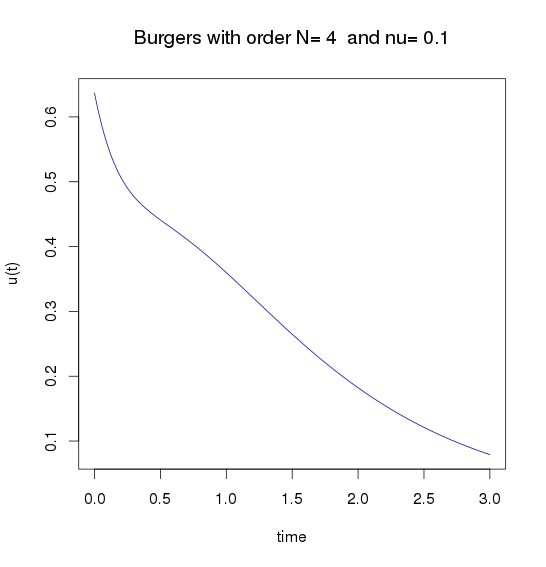}
 \includegraphics[width = 1.95in]{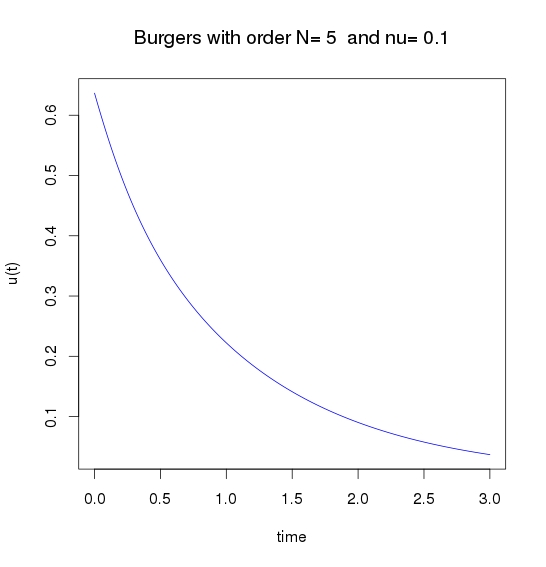} 
 \caption{Simulations for the Burgers equation with the Matlab library {\it pdepe} and with
 the spectral method for $N=4,5$, $u_0(g)=\int_0^1 g(\xi) d\xi$.}
 \label{graph-simu_nu5.1}
\end{figure}

\begin{figure}[H]
  \includegraphics[width = 1.95in]{Burgers_PDEPE_nu=0_01.jpg}\hspace*{-0.5cm} 
 \includegraphics[width = 1.95in]{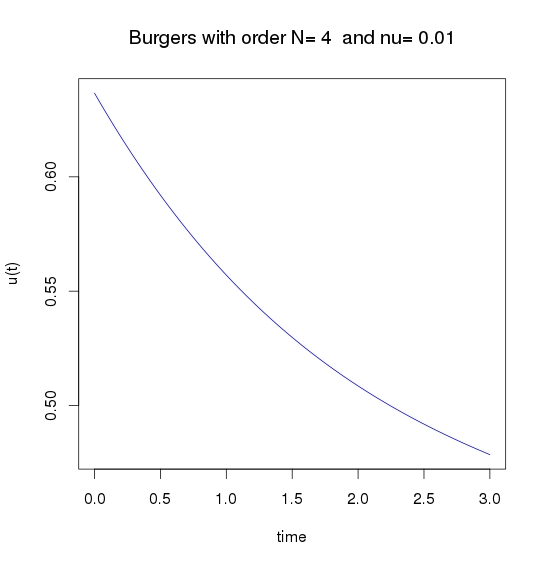}
 \includegraphics[width = 1.95in]{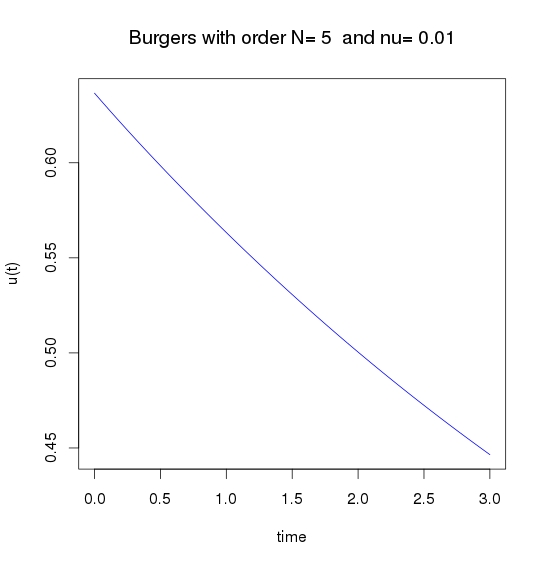} 
 
 \caption{Simulations for the Burgers equation with the Matlab library {\it pdepe}  and with
 the spectral method for $N=4,5$, $u_0(g)=\int_0^1 g(\xi) d\xi$.}
 \label{graph-simu_nu3.1}
\end{figure}

\section{Conclusions.} 
 
In this paper we introduced a numerical method to solve Fokker-Plank-Kolmogorov equations and we tested this
method by applying it to the Kolmogorov equations associated to three stochastic partial differential
equations: a stochastic diffusion, a Fisher-KPP stochastic equation and a stochastic Burgers equation in 1D, in a simple domain in 
the three cases.  The results  obtained  are  really  promising.  However, there  are a few
limitations. The first is that the noise in the SPDE is restricted to the additive case and to cover the
multiplicative case seems unfeasible at this moment. Indeed, even if one is able to prove existence and 
uniqueness of an invariant measure $\nu$ for the Ornstein-Uhlenbeck semigroup associated with the SPDE, there would remain the
fully characterize of the measure and to find a basis for the Hilbert Space $L^2(\mathcal{H},\nu)$. Another issue is that we
have applied the method to very simple domains, However, to cover the cases with complex domains one can use ideas of domain 
decomposition techniques similar to those used in spectral element methods. This is part of a forthcoming paper.

The method can be adapted to cover the Fokker-Plank equations associated with SPDE's, this will be studied in a subsequent work.

\medskip
\noindent{\bf Acknowledgement.} The research leading to these results has received funding from the People Programme
(Marie Curie Actions) of the European Union's Seventh Framework Programme (FP7/2007-2013) under the project NEMOH, 
REA grant agreement n. 289976.


\begin{thebibliography}{99}
\bibitem{ba-da} V. Barbu, G. Da Prato : {\it The Kolmogorov equation for a 2D Navier-Stokes stochastic flow in a channel}. nonlinear
 Analysis. 69, pp. 940-949. (2008).
\bibitem{bo-da-ro} V. Bogachev, G. Da Prato, M. R\"oeckner: {\it Existence results for Fokker-Planck
equations in Hilbert space}.
Seminar on Stochastic Analysis, Random Fields and Applications VI. Progress in Probability Volume 63, 2011, pp 23-35 (2010)
 \bibitem{ca-ma} Cameron R.H., Martin W. T.: {\it The orthogonal development of non-linear functionals in series of Fourier-Hermite functionals}.
 Annals of Mathematics 48 (2): 385-392. (1947).
\bibitem{liu2} P.-L. Chow: {\it Infinte-dimensional Kolmogorov equations in Gauss-Sobolev spaces;}. Stoch.
Analy. Applic. 14, 257-282. (1996).
\bibitem{liu1} P.-L. Chow: {\it Infinite-dimensional parabolic equations in Gauss-Sobolev spaces}. Comm. Stoch. Analy, 
1, 71-86. (2007).
\bibitem{liu} P.-L. chow: Stochastic partial differential equations. Chapman and Hall/CRC. (2007) 
 \bibitem{da} G. Da Prato : Kolmogorov equations for stochastic partial differential equations.
Advanced Courses in Mathematics - CRM Barcelona. Birkh\"auser. (2004).
\bibitem{da-de} G. Da Prato, A. Debussche : {\it $m-$Dissipativity of Kolmogorov Operators Corresponding to Burgers Equations with 
Space-time White Noise}. Potential Analysis. Volume 26, Issue 1 , pp 31-55. (2007). 
\bibitem{da-za} G. Da Prato, J. Zabczyk : Second order partial differential equations in Hilbert spaces.
Cambridge University Press, (2002).
\bibitem{da-za1} G. Da Prato, J. Zabczyk: Stochastic equations in infinite dimensions.
Cambridge University Press, (1992).
\bibitem{da-fl-ro} G. Da Prato, F. Flandoli, M. R\"ockner: {\it Fokker-Planck Equations for SPDE with Non-trace-class Noise}. 
Communications in Mathematics and Statistics. Volume 1, Issue 3, pp 281-304, (2013).
\bibitem{do-ia} Doostan A., Iaccarino G.: "A least-squares approximation of partial differential equations with high 
dimensional random inputs" Journal of Computational Physics, Vol. 228, No. 12, pp. 4332– 4345, 2009.
% \bibitem{du-al-am} A. Dummon, C. Allery, A. Ammar: {\it Proper generalized decomposition method for incompressible
% Navier-Stokes equations with a spectral discretization}. Appl. Math. Comput. 219, no. 15,  
% 8145-8162, (2013).
 \bibitem{gi} M. Giles: {\it Improved multilevel Monte Carlo convergence using the Milstein scheme}, Preprints NA-06/22,
 Oxford University Computing Laboratory, Parks Road, Oxford, U.K., (2006).
 \bibitem{gi1} M. B. Giles: {\it Multilevel Monte Carlo path simulation}, Oper. Res., 56 (2008), pp. 607-617.
 \bibitem{go-sc} A.J. Goldberg, J.L. Schwartz: Systems of Ordinary Differential Equations : An Introduction. 
  Published by Joanna Cotler Books. (1972).
 \bibitem{he} S. Heinrich : {\it Multilevel Monte Carlo Methods} Large Scale Scientific Computing, Third International
Conference, LSSC 2001, Sozopol, Bulgaria, June 6-10, (2001), vol. 2179 of Lecture Notes in Computer Science, Springer, pp. 58-67 (2001).
\bibitem{ho-lu-ro-zh} T. Y. Hou, W. Luo, B. Rozovskii, and H.M. Zhou:{\it Wiener Chaos Expansions and Numerical Solutions of Randomly Forced 
Equations of Fluid Mechanics},   J. Comput. Phys. , 216 , 687-706 (2006). 
% \bibitem{gu-sc-st} R. Guberovic, C. Schwab, R. Stevenson: {\it Space-Time variational saddle point formulations of
% stokes and Navier-Stokes equations}. preprint.
 \bibitem{huo} T. Y. Hou , W. Luo , B. Rozovskii , H.M. Zhou: {\it Wiener chaos expansions and numerical solutions of randomly forced equations of fluid mechanics} 
Journal of Computational Physics archive. Vol 216 Issue 2, (2006) 
% \bibitem{io-la-de} A. Iollo, S. Lanteri, J.-A. Desideri: {\it Stability Properties of POD Galerkin Approximations for the
% Compressible Navier-Stokes Equations}. Theoretical and Computational Fluid Dynamics. Vol. 13,
% Issue 6, pp 377-396, (2000)
\bibitem{im} P. Imkeller: Malliavin's calculus and applications in stochastic control and finance
IMPAN Lecture Notes, Vol. 1, Warsaw (2008).
\bibitem{je-kl} A. Jentzen, P.E. Kloeden: Taylor approximations for stochastic partial differential equations.
CBMS-NSF Regional Conference Series in Applied Mathematics, 83. Society for Industrial and
Applied Mathematics (SIAM), Philadelphia, PA, (2011).
\bibitem{kl-pl} P. E. Kloeden, E. Platen: Numerical Solution of Stochastic Differential Equations. Stochastic 
Modelling and Applied Probability, Vol. 23, Springer (1992).
\bibitem{le-kn} O. P. Le Matre, O.M. Knio: Spectral methods for uncertainty quantification. With applications
to computational fluid dynamics. Scientific Computation. Springer, New York, (2010).
\bibitem {Lions}J. L. Lions: \textit{Quelques methodes de resolution des
problemes aux limites non lineaires}, Dunod, Paris (1969).
\bibitem{lo} S. V. Lototsky. {\it Chaos Approach to Nonlinear Filtering}. In: D. Crisan and B. L. Rozovskii (editors), 
The Oxford Handbook of Nonlinear Filtering, pp. 231-264, Oxford University Press, (2011).
\bibitem{lo-ro} S. V. Lototsky and B. L. Rozovskii. {\it Wiener Chaos Solutions of Linear Stochastic Evolution Equations}.
Annals of Probability, Vol. 34, No. 2, pp. 638-662, (2006)
\bibitem{mi-gu} J. Ming, M. Gunzburger: {\it Efficient numerical methods for stochastic partial differential equations
through transformation to equations driven by correlated noise}. Int. J. Uncertain. Quantif. 3, no. 4, 321-339, (2013).
\bibitem{luo} W. Luo, Wiener Chaos expansion and numerical solutions for stochastic partial differential
equations. PhD thesis. Instituto de Tecnologa de California (2006).
% \bibitem{ma-hu-za} L. Mathelin, Y. Hussaini, and T. A. Zang. {\it Stochastic approaches to uncertainty quantification
% in CFD simulations}. Numerical Algorithms 38.1-3, 209-236, (2005).
% \bibitem{ma-hu-za-1} L. Mathelin, M. Y. Hussaini, T. A. Zang, F. Bataille: {\it Uncertainty propagation for a turbulent,
% compressible nozzle flow using stochastic methods}. AIAA journal, 42(8), 1669-1676, (2004).
\bibitem{pl} E. Platen: {\it An introduction to numerical methods for stochastic differential equations}.
Acta numerica 8, 197-246, (1999).
 \bibitem{sc-su} C. Schwab, E. S\"uli: {\it Adaptive Galerkin approximation algorithms for Kolmogorov equations in infinite dimensions}
 Stochastic Partial Differential Equations: Analysis and Computations. 1:204-239. (2013).
 \end{thebibliography}
\end{document}